\renewcommand{\epsilon}{\varepsilon}
\renewcommand{\phi}{\varphi}
\newcommand*{\N}{\mathbb{N}}
\newcommand*{\R}{\mathbb{R}}
\newcommand*{\C}{\mathbb{C}}
\newcommand{\norm}[1]{\left \| #1 \right \|}
\newcommand{\abs}[1]{\left|#1 \right|}
\newcommand{\skal}[1]{\left \langle #1 \right \rangle}
\DeclareMathOperator{\dom}{dom}
\def\blfootnote{\xdef\@thefnmark{}\@footnotetext}
\theoremstyle{definition}
\newtheorem{defi}{Definition}[section]
\theoremstyle{plain}
\newtheorem{thm}[defi]{Theorem}
\newtheorem{lem}[defi]{Lemma}
\newtheorem{cor}[defi]{Corollary}
\newtheorem{prop}[defi]{Proposition}
\begin{document}
\title{Accelerated Griffin-Lim algorithm: \\ A fast and provably converging numerical method for phase retrieval}

\author{Rossen Nenov$^{ \dagger}$, Dang-Khoa Nguyen$^{ \star}$, Peter Balazs$^{ \dagger}$ and Radu Ioan Bo\c{t}$^{ \star}$ \\ $^{\dagger}$ Acoustics Research Institute, Vienna, Austria \\ $^{\star}$ University of Vienna, Austria}



\markboth{Journal of \LaTeX\ Class Files,~Vol.~14, No.~8, August~2021}%
{Shell \MakeLowercase{\textit{et al.}}: A Sample Article Using IEEEtran.cls for IEEE Journals}


\maketitle

\begin{abstract}
    The recovery of a signal from the magnitudes of its transformation, like the Fourier transform, is known as the phase retrieval problem and is of big relevance in various fields of engineering and applied physics. In this paper, we present a fast inertial/momentum based algorithm for the phase retrieval problem and we prove a convergence guarantee for the new algorithm and for the Fast Griffin-Lim algorithm, whose convergence remained unproven in the past decade. In the final chapter, we compare the algorithm for the Short Time Fourier transform phase retrieval {with} the Griffin-Lim algorithm and FGLA and {to} other iterative algorithms typically used for this type of problem. 
\end{abstract}

\begin{IEEEkeywords}
phase retrieval, inertial algorithm, Griffin-Lim algorithm, Fast Griffin-Lim algorithm, convergence guarantee.
\end{IEEEkeywords}


\section{Introduction}\blfootnote{This work was supported by the Austrian Science Fund FWF-project NoMASP (“Nonsmooth nonconvex optimization methods for acoustic signal processing”; P 34922-N). The authors would like to express their gratitude to Dr. Nicki Holighaus (Austria Academy of Sciences) for his valuable input and ideas in the numerical experiments section.}

Reconstructing the phase of a signal from phaseless measurements of its (Short Time) Fourier transform is a pervasive challenge, called the phase retrieval problem. It arises in a great amount of areas of applications, most prominently in audio processing \cite{ac1, ac2}, imaging \cite{imag1,imag2}, and electromagnetic theory \cite{elec1,elec2}.  Therefore it is, to this day, an active research topic with numerous algorithms designed to find satisfactory solutions to the phase retrieval problem. These algorithms can be divided into two classes: non-iterative and iterative algorithms. For the \emph{Short Time Fourier transform} {(STFT)}, it was analysed in \cite{Nicki} that the performance of non-iterative algorithms strongly depends on the redundancy of the STFT. They perform better than iterative algorithms in terms of quality and speed for high redundancies, which are rarely considered in practice. For the lower, more common, redundancies the iterative methods gave more qualitative results than the non-iterative ones, which motivates their study.

The \emph{Griffin-Lim algorithm} (GLA) \cite{GLA} is a {well known} and widely used iterative algorithm based on the method of alternating projections and applied to the phase retrieval problem in the time-frequency setting. In optics, this algorithm is also known as the Gerchberg-Saxton algorithm \cite{GS}. Inspired by the \emph{Fast iterative shrinkage threshold algorithm} (FISTA \cite{FISTA}), {the authors of \cite{FGLA} proposed to introduce an} {inertial/momentum step} to GLA, which formed the \emph{Fast Griffin-Lim algorithm} (FGLA). {As a result, they obtained a method that converges faster than GLA} in practice and {recovers} {signals with lower reconstruction error}, but the convergence guarantee remained an open question. Since its introduction, FGLA gained considerable traction for phase retrieval in audio processing \cite{10.5555/2815664,9099086} and machine learning \cite{Marafiotia,SALEEM2022104389}, but still, a convergence result for this iterative algorithm was pending, which motivated this work. 

{In this work, we will present the \emph{Accelerated Griffin-Lim {algorithm}} (AGLA), a new iterative method, and prove a convergence result for it in a generalized setting covering a wide {range} of areas, where the phase retrieval problem arises, beyond the field of audio processing. This method is an extension of FGLA, and therefore our results cover FGLA as well.}

Furthermore, we will round out the theoretical results with the comparison of numerical simulations of AGLA against its predecessors and other iterative algorithms to {highlight} the improved numerical performance of our algorithm.

\section{The Phase Retrieval Problem}
A linear and injective transformation from $\C^L\to \C^M$ with $M\geq L$ can be written as a transformation matrix $T$ in $\C^{M \times L}$ with full column rank, for example, the discrete Fourier transform or the analysis operator of a finite frame \cite{FrameTh}. The vector $s\in [0,+\infty)^M$ will denote the measured magnitudes of the coefficients of the transform. The phase retrieval problem can be expressed mathematically as finding the signal $x^*\in \C^L$, whose transform coefficients match the magnitudes $s$, that is
$\abs{Tx^*}=s$, where $\abs{\cdot}$ is understood as the absolute value applied componentwise. {The feasibility, uniqueness, and stability of the phase retrieval problem have been studied in many works, most notably in \cite{Balan,Grohs}. The results in the present paper are {actually} independent of the solvability of the phase retrieval problem.}

In practice, measurements can include some noise and inaccuracies and therefore we are usually looking for solutions $x^*$ of the following minimization problem
\begin{align}
    \label{GOAL}
    \min_{x\in \C^L} \norm{\abs{Tx}-s},
\end{align}
where $\norm{\cdot}$ denotes the norm of $\C^M$. In \cite{FGLA} it was proposed to consider this problem, as the task in finding a vector $c^*$ in the set of coefficients admitted by the transformation matrix $T$, namely its range, which is as close as possible to the set of coefficients, whose magnitude match with $s$. The range of the transformation matrix $T$ will be denoted as 
\begin{align}
    C_1=\{c \in \C^M \mid \exists x \in \C^L: c=Tx\}.
\end{align}
This set is a linear subspace of $\C^L$. Let $C_2$ be the set of vectors whose magnitudes are equal to $s$, namely
 \begin{align}
              C_2=  \{c \in \C^{M} \mid \abs{c_i}=s_i \quad \forall i \in \{1,\dots,M\}\}.
\end{align}
The set $C_2$ is, by definition, compact. To formulate the problem \eqref{GOAL} as finding the closest point between two sets, we introduce two distance functions. The indicator function $\delta_C$ of a set $C$ is defined as
          \begin{align*}
              \delta_C({c})=\begin{cases} 0 & \text{if } {c} \in C, \\
              +\infty & \text{else,}
              \end{cases}
          \end{align*}
           and the distance function $d_C$ to a compact set $C$ is defined as
          \begin{align}\label{eq:dc}
              d_C({c})=\min_{{v} \in C } \norm{{c}-{v}} \quad \text{ for } c\in \C^M.
          \end{align}
 Our aim is to solve the following optimization problem
\begin{align}
    \min_{c\in \C^M} f(c):= \delta_{C_1}(c)+\frac{1}{2}d_{C_2}^2(c), \label{obj}
\end{align}
which is nonconvex and has a close connection to the original problem \eqref{GOAL}, as can be seen later.
\color{black}

For finding the closest point between two sets iteratively, projection operators are a common tool. Since $C_1$ is a linear subspace spanned by $T$, we can write its orthogonal projection as 
\begin{align*}
    P_{C_1}( {c})=TT^\dagger  {c},
\end{align*}
 where $T^\dagger$ denotes the pseudo-inverse of $T$, which is well-defined since $T$ is assumed to have full column rank \cite{FrameTh}.

Since $C_2$ is a nonconvex closed set,  the projection of a vector $ {c}$ onto $C_2$ is nonempty but not always unique. Therefore we will denote by  $\overline{P}_{C_2}:\C^M\rightrightarrows \C^M$ the possibly set valued projection 
\begin{align*}
    \overline{P}_{C_2}( {c})=\arg\min_{{v} \in C_2}\norm{ {c}- {v}},
\end{align*}
which maps $c$ to the set of its closest points in $C_2$. As in \cite{FGLA} we will denote by $P_{C_2}:\C^M\to \C^M$ a closed form for a possible choice of the projection onto $C_2$, which we will use in the algorithms
\begin{align}\label{def:PC2}
            (P_{C_2}(c))_i=\begin{cases} \frac{s_ic_i}{\abs{c_i}} & \text{ if }c_i\neq 0, \\
              {s}_i & \text{ if }c_i=0. \end{cases}
\end{align}
This projection is equivalent to scaling the elements of $c$ to have the magnitudes of $s$, without changing the phase. The following proposition, whose proof can be found in the appendix, gives another convenient way to write $d_{C_2}$ and states that this choice of $P_{C_2}$ indeed minimizes the distance function and is almost everywhere the unique element of $\overline{P}_{C_2}$. 
\begin{prop}\label{d2}
For all $c\in \C^M$ the distance to $C_2$ can be written as \begin{align}
    d_{C_2}(c)=\norm{\abs{c}-s} \label{dc2}
\end{align} and $d_{C_2}(c)=\norm{c-P_{C_2}(c)}$ holds. If $c \notin D$, where \\
\centerline{$ D:=\{c\in \C^{M} \mid \exists i 
\in \{1,...,M\} : c_i=0 \text{ and } s_i\neq 0\}$,}
then $P_{C_2}(c)$ is the unique closest point to $c$ in $C_2$, namely $\overline{P}_{C_2}(c)=\{P_{C_2}(c)\}.$
\end{prop}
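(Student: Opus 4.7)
The plan is to prove all three assertions by a componentwise analysis of the squared distance, exploiting the fact that $C_2$ is a product of circles (or singletons at zero) in each coordinate.

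First I would observe that for any $v\in C_2$ we have $\|c-v\|^2=\sum_{i=1}^M |c_i-v_i|^2$, and that the constraint $|v_i|=s_i$ couples only the $i$-th coordinates. Hence the minimization in \eqref{eq:dc} decouples into $M$ independent one-dimensional problems $\min_{|v_i|=s_i}|c_i-v_i|^2$. For each index $i$ I would split into cases: if $c_i\neq 0$, write $c_i=|c_i|e^{\mathrm{i}\theta_i}$ and $v_i=s_ie^{\mathrm{i}\phi_i}$; expanding gives $|c_i-v_i|^2=|c_i|^2-2|c_i|s_i\cos(\theta_i-\phi_i)+s_i^2$, which is minimized when $\phi_i=\theta_i$, yielding $(|c_i|-s_i)^2$ and attained uniquely at $v_i=s_ic_i/|c_i|$. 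If $c_i=0$, then $|c_i-v_i|^2=s_i^2=(|c_i|-s_i)^2$ for every admissible $v_i$, so the minimum value is the same but attained at every point of the circle of radius $s_i$ (or only at $v_i=0$ if $s_i=0$).

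Summing the componentwise minima gives $d_{C_2}(c)^2=\sum_{i=1}^M(|c_i|-s_i)^2=\||c|-s\|^2$, which proves \eqref{dc2}. Plugging in the definition \eqref{def:PC2} of $P_{C_2}(c)$ one checks directly that $P_{C_2}(c)\in C_2$ and that $\|c-P_{C_2}(c)\|^2$ equals the same sum, so $d_{C_2}(c)=\|c-P_{C_2}(c)\|$, giving assertion (ii).

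For the uniqueness claim, the case analysis above shows that the minimizer in coordinate $i$ is unique precisely when either $c_i\neq 0$ (forced to be $s_ic_i/|c_i|$) or $c_i=0=s_i$ (forced to be $0$); nonuniqueness occurs iff there is some index with $c_i=0$ and $s_i\neq 0$, i.e.\ iff $c\in D$. Therefore, for $c\notin D$, the unique minimizer is exactly the vector $P_{C_2}(c)$ defined in \eqref{def:PC2}, so $\overline{P}_{C_2}(c)=\{P_{C_2}(c)\}$.

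The argument is essentially elementary; the only mild subtlety is handling the coordinates where $c_i=0$, which is precisely what forces the exceptional set $D$ to appear and where attention is needed to ensure the chosen closed form in \eqref{def:PC2} still produces a point at minimal distance even though uniqueness fails on $D$.
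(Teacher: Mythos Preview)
Your proposal is correct and follows essentially the same approach as the paper: both decouple the distance into independent one-dimensional problems, use the polar-coordinate expansion to identify the minimum and minimizer when $c_i\neq 0$, handle the degenerate coordinate $c_i=0$ separately, and read off uniqueness from this case split. Your treatment is in fact slightly more explicit about the subcase $c_i=0=s_i$ in the uniqueness argument, but otherwise the arguments are the same.
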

{The set D consists of the points, where the magnitude of a coefficient is zero while respective measured one $s_{i}$ is not. This set has been observed to be problematic, for example, in the work \cite{Pole} it is proven that the Phase Derivative of the STFT is numerically unstable, i.e., there is a peculiar pole phenomenon at points in this set.}
It will have an important role in the convergence proof of the iterates, as we can only guarantee differentiability of $\frac{1}{2}d_{C_2}^2$ on $\C^M\setminus D$. 

Proposition \ref{d2} motivates the choice of the objective function $f$, since for $c\in C_1$ we see that $f(c)$ coincides with the function value of \eqref{GOAL} at $T^\dagger c$. One can show that for any local/global minimizer $c^*$ of \eqref{obj}, $x^*=T^\dagger c^*$ is a local/global minimizer of \eqref{GOAL} and vice versa.

\color{black}

\section{The Algorithms}

In 1984, Griffin and Lim presented the algorithm known as \emph{Griffin-Lim algorithm} (\hyperref[alg:GLA]{GLA}). They showed that the iterates of the algorithm converge to a set of critical points of a magnitude-only distance measure and that the objective function values of the iterates are non-increasing. {In the following, $N$ denotes the amount of iterations of the algorithms}, which can be chosen as $+ \infty$ as well.
{Here, without any risk of confusion, we use the subscript $n$ to mean the $n^{th}$ iteration, like $c_{n}, t_{n}$. On the other hand, the subscript $i$ means $c_i$ is the $i^{th}$ component of the vector $c \in \C^{M}$.}
\begin{algorithm}[H]
\caption{Griffin-Lim algorithm}\label{alg:GLA}
\begin{algorithmic}
\STATE 
\STATE {\textsc{Initialize }}$c_0\in \C^M$ \smallskip
\STATE \hspace{0.5cm}$ \textbf{Iterate for } n=1,\dots,N  $
\STATE \hspace{0.5cm}$ c_n =  P_{C_1}(P_{C_2}(c_{n-1})) $\smallskip 

\STATE {\textsc{Return }} $T^\dagger c_N$
\end{algorithmic}

\end{algorithm}

In 2013, Perraudin, Balazs and Søndergaard stated that \hyperref[alg:GLA]{GLA} can be seen as the method of alternating projections of the iterates onto $C_2$ and $C_1$ and proposed the \emph{Fast Griffin-Lim} (\hyperref[alg:FGLA]{FGLA}) by adding an inertial/momentum step {\cite{FGLA}. The algorithm is} based on the idea of the inertial proximal-gradient method, {in the spirit of the Heavy Ball method \cite{HBF} and Fast iterative shrinkage threshold algorithm (FISTA) \cite{FISTA},} {which also works in the nonconvex setting \cite{Bot-Csetnek-Laszlo}.}

{This algorithm achieved better results  {than} \hyperref[alg:GLA]{GLA} in numerical experiments, but a convergence guarantee was not addressed.}

\begin{algorithm}[H]
\caption{Fast Griffin-Lim algorithm }\label{alg:FGLA}
\begin{algorithmic}
\STATE 
\STATE {\textsc{Initialize }}$c_0\in \C^M$, $t_0\in C_1$ and $\alpha >0$ \smallskip
\STATE \hspace{0.5cm}$ \textbf{Iterate for } n=1,\dots,N  $
\STATE \hspace{0.5cm}$ t_n =  P_{C_1}(P_{C_2}(c_{n-1})) $
\STATE \hspace{0.5cm}$ c_n = t_n + \alpha (t_n-t_{n-1}), $  \smallskip 

\STATE {\textsc{Return }} $T^\dagger c_N$
\end{algorithmic}

\end{algorithm}

In this paper we present a further modification to \hyperref[alg:GLA]{GLA}, by adding a second inertial sequence $(d_n)_{n\in\N}$, which will not be projected. Its purpose is to stabilize the algorithm and avoid getting stuck at the points, in which \hyperref[alg:FGLA]{FGLA} stops at, {while the distance between the projection of $(c_n)_{n\in\N}$ and the nonprojected $(d_n)_{n\in\N}$ is still large}. A similar idea can be found in {\cite{Bot-Csetnek-Laszlo,Laszlo}}.

\begin{algorithm}[H]
\caption{Accelerated Griffin-Lim algorithm}\label{alg:AGLA}
\begin{algorithmic}
\STATE 
\STATE {\textsc{Initialize }}$c_0\in \C^L$, $t_0,d_0\in C_1$ and $\alpha, \beta, \gamma>0$ \smallskip
\STATE \hspace{0.5cm}$ \textbf{Iterate for } n=1,\dots,N  $
\STATE \hspace{0.5cm}$ t_n = (1-\gamma)d_{n-1}+\gamma P_{C_1}(P_{C_2}(c_{n-1})) $
\STATE \hspace{0.5cm}$ c_n = t_n + \alpha (t_n-t_{n-1}), $ 
\STATE \hspace{0.5cm}$  d_n = t_n + \beta (t_n-t_{n-1}) $ \smallskip 

\STATE {\textsc{Return }} $T^\dagger c_N$
\end{algorithmic}

\end{algorithm}
\hyperref[alg:AGLA]{AGLA} is a generalization of \hyperref[alg:FGLA]{FGLA} since for $\gamma=1$ the generated sequences by \hyperref[alg:FGLA]{FGLA} and \hyperref[alg:AGLA]{AGLA} coincide. In the following chapter, we will state parameter choices of $\alpha$, $\beta$, and $\gamma$, for which we prove the convergence of the algorithm.


\section{Convergence of the function values}
In this chapter, we prove the convergence result for \hyperref[alg:AGLA]{AGLA}. We will use the following identity, which is a generalization of the parallelogram law, typically used in the proof of convergence of the function values of the iterates for algorithms with inertial sequences. 
{It will be used several times in the analysis. It can be shown by a simple calculation, hence we omit the detail. Precisely,}
for any two vectors $a,b\in \C^M$ and any two real numbers $\tau, \sigma \in \R$, it holds
    \begin{multline}  
	\left\lVert \tau a + \sigma b \right\rVert ^{2} = \left( \tau + \sigma \right) \tau \left\lVert a \right\rVert ^{2} + \left( \tau + \sigma \right) \sigma \left\lVert b \right\rVert ^{2}\\- \tau \sigma \left\lVert a - b \right\rVert ^{2}. \label{Star}
\end{multline}


Using this identity we will state the proof of the function values of the iterates generated by \hyperref[alg:AGLA]{AGLA} for certain parameter regimes.
\begin{thm} \label{thm:Main}
Let $(c_n)_{n\in \N}$, $(d_n)_{n\in \N}$ and $(t_n)_{n\in \N}$ be the sequences generated by \hyperref[alg:AGLA]{AGLA}.
   Suppose that
   \begin{equation}
   \label{cond1}
       0<\gamma<2 \textrm{ and } 0 \leq 2 \beta \abs{1-\gamma} < 2-\gamma ,
   \end{equation}
   and
   \begin{align}
       0 \leq \alpha < \begin{cases}
           \left( 1 - \frac{1}{\gamma} \right) \beta + \frac{1}{\gamma} - \frac{1}{2} & \text{ if } 0 < \gamma \leq 1, \\
           \frac{1}{ 2\beta(\gamma-1) + \gamma  } - \frac{1}{2} &  \text{ if } 1 < \gamma < 2 .
       \end{cases} 
       \label{cond}
   \end{align}
   
   
Then the following statements are true
\begin{enumerate}
    \item There exist constants $K_1>K_2>0$ such that the following descent property holds for all $n\geq 1$
    \begin{align*}
    d_{C_2}^2(c_n)+K_1\Delta_{t_n}^2  &\leq d_{C_2}^2(c_{n-1})+K_2\Delta_{t_{n-1}}^2,
\end{align*}
where $\Delta_{t_n}:=\norm{t_n-t_{n-1}}$. Therefore $\Delta_{t_n}$ converges to zero and $\lim_{n \to + \infty} d_{C_2}^2(c_n) \in \R$ exists.
\item[(ii)] $(c_n)_{n\in \N}$ is a bounded sequences and 
     every cluster point $c^*$ of $(c_n)_{n\in \N}$ fulfills
\begin{align*}
    P_{C_1}(P_{C_2}(c^*))=c^*.
\end{align*}
\end{enumerate} 
\end{thm}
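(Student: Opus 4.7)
The natural Lyapunov function is $d_{C_2}^2(c_n) + K_2\Delta_{t_n}^2$, and the strategy for (i) is to derive a one-step decrease by starting from the trivial estimate $d_{C_2}^2(c_n) \leq \|c_n - P_{C_2}(c_{n-1})\|^2$ and expanding the right-hand side in two stages via identity \eqref{Star}. The key structural fact is that $c_n, t_n, d_n \in C_1$ for every $n$, which follows by induction from the assumptions $c_0, t_0, d_0 \in C_1$ and the linearity of $P_{C_1}$. This lets us apply the Pythagorean theorem in the subspace $C_1$ relative to $p_n := P_{C_1}(P_{C_2}(c_{n-1}))$, and in particular $\|p_n - P_{C_2}(c_{n-1})\|^2 = d_{C_2}^2(c_{n-1}) - \|c_{n-1} - p_n\|^2$ whenever any vector of $C_1$ is compared with $P_{C_2}(c_{n-1})$.

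Using the splitting $c_n = (1+\alpha)t_n - \alpha t_{n-1}$, the first application of \eqref{Star} with $\tau = 1+\alpha,\ \sigma = -\alpha$ yields
\begin{align*}
\|c_n - P_{C_2}(c_{n-1})\|^2 &= (1+\alpha)\|t_n - P_{C_2}(c_{n-1})\|^2 \\
&\quad - \alpha\|t_{n-1} - P_{C_2}(c_{n-1})\|^2 + \alpha(1+\alpha)\Delta_{t_n}^2.
\end{align*}
The second application, with $\tau = \gamma,\ \sigma = 1-\gamma$ on $t_n = \gamma p_n + (1-\gamma)d_{n-1}$, followed by Pythagoras in $C_1$, simplifies both $\|t_n - P_{C_2}(c_{n-1})\|^2$ and $\|t_{n-1} - P_{C_2}(c_{n-1})\|^2$; after substitution the coefficient of $d_{C_2}^2(c_{n-1})$ collapses to one, leaving the clean intermediate bound
\begin{align*}
d_{C_2}^2(c_n) &\leq d_{C_2}^2(c_{n-1}) - \|c_{n-1}-p_n\|^2 + \alpha(1+\alpha)\Delta_{t_n}^2\\
&\quad + (1+\alpha)(1-\gamma)^2\|d_{n-1}-p_n\|^2 - \alpha\|t_{n-1}-p_n\|^2.
\end{align*}
Each of $\|c_{n-1}-p_n\|^2$, $\|d_{n-1}-p_n\|^2$, $\|t_{n-1}-p_n\|^2$ is then an explicit quadratic in the two vector increments $t_n - t_{n-1}$ and $t_{n-1} - t_{n-2}$, obtained via $\gamma p_n = t_n - (1-\gamma)d_{n-1}$ together with the definitions of $c_{n-1}$ and $d_{n-1}$. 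A single Young inequality on the resulting inner-product cross term, followed by collecting coefficients, produces a bound of the form $d_{C_2}^2(c_n) + K_1\Delta_{t_n}^2 \leq d_{C_2}^2(c_{n-1}) + K_2\Delta_{t_{n-1}}^2$; the assumptions \eqref{cond1} and \eqref{cond} are precisely tuned to make $K_1 > K_2 > 0$. Telescoping then gives $\sum_n \Delta_{t_n}^2 < \infty$, so $\Delta_{t_n} \to 0$, and the monotone quantity $d_{C_2}^2(c_n) + K_2 \Delta_{t_n}^2$ has a limit, forcing $d_{C_2}^2(c_n)$ to converge.

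For (ii), boundedness is immediate from $\|c_n\| \leq \|P_{C_2}(c_n)\| + d_{C_2}(c_n) = \|s\| + d_{C_2}(c_n)$, the last quantity being bounded by (i). If $c_{n_k} \to c^*$, the identities $c_n - t_n = \alpha(t_n-t_{n-1})$, $d_n - t_n = \beta(t_n-t_{n-1})$, and $c_n - c_{n-1} = (1+\alpha)(t_n-t_{n-1}) - \alpha(t_{n-1}-t_{n-2})$, combined with $\Delta_{t_n}\to 0$, force $t_{n_k-1}, t_{n_k}, d_{n_k-1}$ and $c_{n_k-1}$ to share the common limit $c^*$. Passing to the limit in $t_n = \gamma P_{C_1}(P_{C_2}(c_{n-1})) + (1-\gamma)d_{n-1}$, by linearity of $P_{C_1}$ and continuity of $P_{C_2}$ on $\C^M \setminus D$ (with a closed-graph and compactness argument on $\overline{P}_{C_2}$ covering the exceptional case $c^* \in D$), yields $c^* = \gamma P_{C_1}(P_{C_2}(c^*)) + (1-\gamma)c^*$, equivalently $P_{C_1}(P_{C_2}(c^*)) = c^*$. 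The main obstacle is the bookkeeping in the last step of (i): one has to select the Young weight so that exactly the two piecewise thresholds on $\alpha$ in \eqref{cond} emerge, the case split between $0 < \gamma \leq 1$ and $1 < \gamma < 2$ being driven by the sign change of $1-\gamma$ in the cross term.
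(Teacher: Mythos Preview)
Your overall strategy for part (i) is sound and your intermediate bound
\[
d_{C_2}^2(c_n) \leq d_{C_2}^2(c_{n-1}) - \|c_{n-1}-p_n\|^2 + \alpha(1+\alpha)\Delta_{t_n}^2 + (1+\alpha)(1-\gamma)^2\|d_{n-1}-p_n\|^2 - \alpha\|t_{n-1}-p_n\|^2
\]
is correct. However, the route differs from the paper's. The paper does \emph{not} expand $\|c_n - P_{C_2}(c_{n-1})\|^2$ directly; instead it writes the Pythagorean relation $\|z-x_n\|^2 = \|z-y_n\|^2 + \|y_n-x_n\|^2$ for a generic $z\in C_1$, expands $\|z-y_n\|^2$ via \eqref{Star} using $y_n=\frac{1}{\gamma}t_n+\frac{\gamma-1}{\gamma}d_{n-1}$, and then substitutes $z=c_n$ and $z=c_{n-1}$ and equates the two resulting right-hand sides. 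This yields an intermediate inequality in terms of $\|c_n-d_{n-1}\|^2$ and $\|c_{n-1}-t_n\|^2$ rather than your three $p_n$-centered norms. Those two quantities are then bounded by a second application of \eqref{Star} in which a nonnegative cross term is simply dropped (its sign flipping with $1-\gamma$ is what forces the case split), producing explicit polynomials $K_1,K_2$ in $\alpha,\beta,\gamma$ that are checked against \eqref{cond1}--\eqref{cond} in a separate lemma. Your Young-inequality step plays the same role but carries a free weight; you assert that the stated thresholds on $\alpha$ emerge ``precisely'' without exhibiting the weight or the resulting $K_1,K_2$. Since this verification is the entire content of the parameter conditions, the proposal as written leaves the decisive computation undone---the outline is correct, but the claim that \eqref{cond1}--\eqref{cond} are exactly what your decomposition delivers is not substantiated.

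For part (ii) your argument is essentially the paper's: both use $\Delta_{t_n}\to 0$ together with \eqref{PR1.5} to force $P_{C_1}(P_{C_2}(c_n))-c_n\to 0$, then pass to subsequential limits. Your explicit mention of the continuity issue at points of $D$ is more careful than the paper, which asserts the fixed-point identity at every cluster point without discussing the discontinuity of $P_{C_2}$ there.
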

\begin{proof}
Let $n\geq 1$. By the definition of the algorithm, the sequences $(c_n)_{n\geq 1}$, $(t_n)_{n\in N}$ and $(d_n)_{n\in\N}$ lie in the linear subspace $C_1$. This observation will be useful throughout the proof. 

$P_{C_1}$ is the orthogonal projection onto the linear subspace $C_1$ and therefore the identity
\begin{align}  \label{orth.}
    \norm{P_{C_1}(x)-x}^2+\norm{y}^2=\norm{P_{C_1}(x)-x+y}^2 
\end{align}
holds { for all $(x,y) \in \C^L \times C_1$}. Define $x_n:=P_{C_2}(c_{n-1})$ and $y_n=P_{C_1}(x_n)$, then for arbitrary $z\in C_1$ it holds $z-y_n\in C_1$. Hence according to \eqref{orth.} we have
\begin{equation}
\label{PR1}
\norm{y_n-x_n}^2+\norm{z-y_n}^2 = \norm{z-x_n}^2.
\end{equation}
By rewriting the definition of the algorithm, we see that
\begin{align} \label{PR1.5}
    \frac{1}{\gamma} t_n+\frac{\gamma-1}{\gamma}d_{n-1}=P_{C_1}(P_{C_2}(c_{n-1})),
\end{align}
and therefore we can write $y_n=\frac{1}{\gamma} t_n+\frac{\gamma-1}{\gamma}d_{n-1}$.
Since $z-y_n = \frac{1}{\gamma} (z-t_n)+\frac{\gamma-1}{\gamma}(z-d_{n-1})    $ the following expression can be expanded using the identity \eqref{Star}
\begin{multline}\label{PR2}
    \norm{z-y_n}^2  
    = \frac{1}{\gamma} \norm{z-t_n }^2 + \frac{\gamma-1}{\gamma} \norm{z-d_{n-1}  }^2 \\- \frac{\gamma-1}{\gamma^2}\norm{t_n-d_{n-1} }^2 .
\end{multline} 
 Combining \eqref{PR1} and \eqref{PR2} leads to
\begin{multline}
    \norm{y_n-x_n}^2-\frac{\gamma-1}{\gamma^{{2}}} \norm{t_n-d_{n-1}}^2 = \\ \norm{z-x_n}^2  - \frac{1}{\gamma} \norm{z-t_n}^2 - \frac{\gamma-1}{\gamma} \norm{z-d_{n-1} }^2.    \label{PR3} 
\end{multline}
The left hand side is independent of the choice of $z$, hence we can substitute $z$ by $c_n$ and $c_{n-1}$ and equate both right hand sides of \eqref{PR3} to get
\begin{align}
          \norm{c_{n}-x_n}^2 - \frac{1}{\gamma} \norm{c_{n}-t_n }^2 - \frac{\gamma-1}{\gamma} \norm{c_{n}-d_{n-1}  }^2 =  \notag\\
    d_{C_2}^2(c_{n-1}) - \frac{1}{\gamma} \norm{c_{n-1}-t_n }^2 - \frac{\gamma-1}{\gamma} \norm{c_{n-1}-d_{n-1}  }^2 ,\notag \\  \label{PR4} 
\end{align}
 where we used the fact that $d_{C_2}(c_{n-1})=\norm{c_{n-1}-x_n}.$ By the definition of the generated sequences, we can see that the following identities hold
\begin{align}
    \norm{c_n-t_n}^2&=\alpha^2\Delta_{t_n}^2\label{i ct}, \\
    \norm{c_{n}-d_{n}}^2&=(\alpha-\beta)^2\Delta_{t_n}^2, \label{i cd} \\
    \norm{t_{n}-d_{n}}^2&=\beta^2\Delta_{t_n}^2. \label{i td}
\end{align}
Furthermore, we can use the property that the distance of $c_n$ to $C_2$ is not larger than the distance of $c_n$ to an arbitrary point in $C_2$
\begin{align}
    d_{C_2}(c_n)=\min_{x\in C_2}\norm{c_{n}-x}\leq \norm{c_{n}-x_n}.\label{minD}
\end{align}
Applying \eqref{i ct}, \eqref{i cd} and \eqref{minD} into \eqref{PR4} yields
\begin{multline} 
    d_{C_2}^2(c_n)- \frac{\alpha^2}{\gamma}\Delta_{t_n}^2 - \frac{\gamma-1}{\gamma} \norm{c_{n}-d_{n-1}  }^2 \leq \\ d_{C_2}^2(c_{n-1}) - \frac{1}{\gamma} \norm{c_{n-1}-t_n }^2 - \frac{\gamma-1}{\gamma}(\alpha-\beta)^2\Delta_{t_{n-1}}^2. \label{PR5}
\end{multline}
At this point, we need to distinguish between the two cases $\gamma \in (0,1]$ and $\gamma > 1$. \smallskip \\
\textbf{Case 1:} Assume that $\gamma \in (0,1]$. By the definition of the sequences, we see that 
\begin{align*}
c_{n}-d_{n-1}&= (1+\alpha)(t_n-t_{n-1})-\beta(t_{n-1}-t_{n-2}) \\
    t_n-c_{n-1}&=  (t_n-t_{n-1})-\alpha(t_{n-1}-t_{n-2}) 
\end{align*}
and by applying the identity \eqref{Star} and leaving out a positive term, we get the following estimations
\begin{align*}
     \norm{c_{n}-d_{n-1}}^2&\geq (1+\alpha-\beta) ((1+\alpha)\Delta_{t_n}^2 - \beta \Delta_{t_{n-1}}^2 ), \notag \\
     \norm{t_n-c_{n-1}}^2 &\geq (1-\alpha)(\Delta_{t_n}^2-\alpha \Delta_{t_{n-1}}^2). 
\end{align*}
Applying these estimates into \eqref{PR5} leads to
\begin{align} \label{PR7}
    d_{C_2}^2(c_n)+K_1\Delta_{t_n}^2 \leq  d_{C_2}^2(c_{n-1})+K_2\Delta_{t_{n-1}}^2,
\end{align}
with $K_1:=\frac{1-\gamma}{\gamma}(1+2\alpha+\alpha^2-\beta-\alpha\beta)+\frac{1}{\gamma}(1-\alpha-\alpha^2)$ and $K_2:=\frac{1-\gamma}{\gamma}(\beta-\alpha \beta + \alpha^2)+\frac{1}{\gamma}(\alpha-\alpha^2)$.
{Lemma \ref{lem:para} ensures that} \eqref{cond1} and \eqref{cond} imply $K_1 > K_2 > 0.$ \\
\smallskip
\textbf{Case 2:} Assume that $\gamma > 1$. Similarly, we see that 
\begin{align*}
c_{n}-d_{n-1}&= (1+\alpha)(t_n-t_{n-1})+\beta(t_{n-2}-t_{n-1}) \\
    t_n-c_{n-1}&=  (t_n-t_{n-1})-\alpha(t_{n-1}-t_{n-2}) 
\end{align*}
and by applying the identity \eqref{Star}, we get 
\begin{align*}
     \norm{c_{n}-d_{n-1}}^2&\leq (1+\alpha+\beta) ((1+\alpha)\Delta_{t_n}^2 + \beta \Delta_{t_{n-1}}^2 ), \notag \\
     \norm{t_n-c_{n-1}}^2 &\geq (1-\alpha)(\Delta_{t_n}^2-\alpha \Delta_{t_{n-1}}^2). 
\end{align*}
Applying these estimates into \eqref{PR5} leads to
\begin{align} \label{PR7.5}
    d_{C_2}^2(c_n)+K_1\Delta_{t_n}^2 \leq  d_{C_2}^2(c_{n-1})+K_2\Delta_{t_{n-1}}^2,
\end{align}
with $K_1:=\frac{1-\gamma}{\gamma}(1+2\alpha+\alpha^2+\beta+\alpha\beta)+\frac{1}{\gamma}(1-\alpha-\alpha^2)$ and $K_2:=\frac{1-\gamma}{\gamma}(\alpha^2-\beta-3\alpha \beta)+\frac{1}{\gamma}(\alpha-\alpha^2)$. {Lemma \ref{lem:para}} asserts that \eqref{cond1} and \eqref{cond} imply $K_1>K_2>0.$

On the one hand, we can deduce from \eqref{PR7} and \eqref{PR7.5} that the sequence
\begin{align}
    \left( d_{C_2}^2(c_n)+K_2\Delta_{t_n}^2 \right) _{n\in \N} \label{PR7.7}
\end{align}
is positive and non-increasing as $n$ increases, therefore the sequence \eqref{PR7.7} converges as $n \to + \infty$. On the other hand, we can rewrite \eqref{PR7} and \eqref{PR7.5} to get
\begin{multline*}
        0\leq (K_1-K_2)\Delta_{t_n}^2 \leq  d_{C_2}^2(c_{n-1}) -  d_{C_2}^2(c_n) \\+ K_2(\Delta_{t_{n-1}}^2-\Delta_{t_n}^2).
\end{multline*}
If we sum up this inequality we deduce that
\begin{multline*}
   (K_1-K_2) \sum_{j=2}^N\Delta_{t_j}^2 \leq d_{C_2}^2(c_{1}) -  d_{C_2}^2(c_N) \\+ K_2(\Delta_{t_{1}}^2-\Delta_{t_N}^2) .
\end{multline*}
Since {for all $n \geq 2$ it holds $ d_{C_2}^2(c_{0}) + K_2 \Delta_{t_{0}}^2 \geq 0$, by taking the limit $n \to + \infty$ we see that}
\begin{align*}
    \sum_{j=2}^{+\infty}\Delta_{t_j}^2 \leq d_{C_2}^2(c_{1}) + K_2 \Delta_{t_{1}}^2 < + \infty
\end{align*}
holds, and therefore $\Delta_{t_n} \to 0$ as $n \to + \infty.$ 
{Moreover, we already showed that \eqref{PR7.7} is converges}
and as a consequence $d_{C_2}(c_n)$ also converges as $n \to + \infty$. Hence the sequence $(c_n)_{n\in \N}$ must be bounded, since its distance to the bounded set $C_2$ converges. By \eqref{i ct} and \eqref{i cd}, we see that $(\norm{c_n-t_n})_{n\in \N}$ and $(\norm{c_n-d_n})_{n\in \N}$ converge to zero as well. Using this observation and \eqref{PR1.5} we conclude 
\begin{align}\label{ConvCrit}
    P_{C_1}(P_{C_2}(c_n))-c_n \to 0 \text{ as } n \to + \infty.
\end{align}
Furthermore, since $c_n$ is a bounded sequence, there exist cluster points. For each cluster point $c^*$ 
\begin{align*}
    P_{C_1}(P_{C_2}(c^*))=c^*
\end{align*}
has to hold by \eqref{ConvCrit}. 
\end{proof}

In numerical experiments we will see that {whenever} $\beta$ {or} $\gamma$ are not chosen to fulfill \eqref{cond1}, then \hyperref[alg:AGLA]{AGLA} does not converge. This suggests that it might not be possible to extend the condition \eqref{cond1}. On the other hand, \eqref{cond1} and \eqref{cond} together are sufficient conditions to guarantee convergence. 



\section{Convergence of the iterates}
In this section, we will associate $c\in \C^M$ with {$y\in \R^{M \times 2}$}, where $y_i=(\text{Re}(c_i),\text{Im}(c_i))$ for all $i\in \{1,\dots, M\}$, to apply the subdifferential calculus and the \emph{Kurdyka–\L{}ojasiewicz-property} (KL-property), which are defined for functions from $\R^{M \times 2}$ to ${\overline{\R} := \R \cup \{ \pm \infty \}}$. The set $C_1\subseteq \R^{M \times 2}$ remains a linear subspace and the set $C_2$ can be written as
\begin{align}
    C_2=\{ y\in \R^{M \times 2}: \norm{y_i}=s_i, \forall i=1,...,M \}. \label{C2alg}
\end{align}
The domain of an extended real-valued function $f$ is defined as $\dom f = \{ {y}:f( {y})<+\infty\}. $ For our objective function \eqref{obj} it holds $\dom f = C_1$. A function is called proper if it never attains the value $-\infty$ and its domain is a nonempty set. Since our objective function is nonconvex and not everywhere differentiable, we introduce the notion of the limiting subdifferential following  \cite[Definition 8.3]{rockafellar}.

\begin{defi} For the proper function $f:\R^{M \times 2}\to \overline{\R}$
and point $\Bar{y}\in \dom f$, the \textit{regular subgradient} $\hat{\partial} f(\Bar{y})$ is defined as the set of vectors $\Bar{v} \in \R^{M \times 2}$ which fulfill
\begin{align*}
   \liminf_{y\to \Bar{y}}\frac{f(y)-f(\Bar{y})-\skal{\Bar{v},y-\Bar{y}}}{\norm{y-\Bar{y}}}\geq 0,
\end{align*}
and the \textit{limiting subdifferential} $\partial f(\Bar{y})$ is defined as the set of vectors $\Bar{v}\in \R^{M \times 2}$ such that there exist sequences $(y_n)_{n\in \N}\subseteq \R^{M \times 2}$ and $v_n \in \hat{\partial} f(y_n)$ such that $y_n \to \Bar{y}$, $f(y_n)\to f(\Bar{y})$ and $v_n \to \Bar{v}$ as $n \to + \infty$.

\end{defi}

{The subdifferential is a set-valued operator and its domain is defined as $ \dom \partial F :=\{u\in \R^{M \times 2}: \partial F(u)\neq \emptyset \}$. If $f$ is continuously differentiable on a open set, then $\partial f$  is single-valued and thus coincide with $\nabla f$ on this set \cite[Corrolary 9.19]{rockafellar}.}

The following proposition states the formula for the subgradient of our objective function,  {whose proof is postponed to the appendix}.
\begin{prop} \label{prop:Subdiff}
For the function $f:\R^{M \times 2}\to \overline{\R}$, $y\mapsto \delta_{C_1}(y)+\frac{1}{2}d_{C_2}^2(y)$ the limiting subdifferential $\partial f: \R^{M \times 2} \rightrightarrows \R^{M \times 2}$ is given by
\begin{align*}
    \partial f(y) = C_1^\perp+y-{P}_{C_2}(y), \text{ for } y\in C_1 \setminus D,
\end{align*}
where 

\centerline{$D=\{y\in \R^{M \times 2} \mid \exists i 
\in \{1,...,M\} : y_i=0 \text{ and } s_i\neq 0\}$.} 
Furthermore for $y\in C_1 \setminus D$ it holds
\begin{align*}
    d_{\partial f(y)}(0)\leq \norm{y-P_{C_1}(P_{C_2}(y)) } .
\end{align*}
\end{prop}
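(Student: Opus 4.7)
The plan is to decompose $f = \delta_{C_1} + \frac{1}{2}d_{C_2}^{2}$ and handle the two summands separately, then stitch them together via a sum rule for the limiting subdifferential.

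First I would treat the indicator term. Since $C_{1}$ is a linear subspace (in fact closed and convex), the normal cone at any $y \in C_{1}$ is simply the orthogonal complement, and for a closed convex set the limiting subdifferential of its indicator coincides with the normal cone. Hence $\partial \delta_{C_{1}}(y) = C_{1}^{\perp}$ for every $y \in C_{1}$. Next I would show that $\frac{1}{2}d_{C_{2}}^{2}$ is continuously differentiable on the open set $\R^{M\times 2}\setminus D$ with $\nabla \tfrac{1}{2}d_{C_{2}}^{2}(y) = y - P_{C_{2}}(y)$. The input here is Proposition \ref{d2}, which guarantees that on $\R^{M\times 2}\setminus D$ the projection is single-valued and given by the closed form \eqref{def:PC2}; combined with a standard Moreau-envelope/squared-distance-function argument (writing $\tfrac{1}{2}d_{C_{2}}^{2}(z) \leq \tfrac{1}{2}\|z - P_{C_{2}}(y)\|^{2}$ to get the upper bound on the difference quotient, and using any selection $w \in \overline{P}_{C_{2}}(z)$ together with continuity of $P_{C_2}$ near $y$ for the matching lower bound) this yields Fréchet differentiability with the stated gradient.

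With these two ingredients in place I would apply the sum rule for the limiting subdifferential: when one summand is $C^{1}$ around the point, the subdifferential of the sum is the sum of the subdifferentials \cite[Exercise 8.8]{rockafellar}. For $y \in C_{1}\setminus D$ this yields
\begin{align*}
\partial f(y) \;=\; \partial \delta_{C_{1}}(y) + \nabla \tfrac{1}{2}d_{C_{2}}^{2}(y) \;=\; C_{1}^{\perp} + y - P_{C_{2}}(y),
\end{align*}
which is exactly the announced formula.

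For the distance estimate I would exploit the affine structure of $\partial f(y)$. Minimizing $\|w + (y - P_{C_{2}}(y))\|$ over $w \in C_{1}^{\perp}$ is the problem of projecting $-(y - P_{C_{2}}(y))$ onto $C_{1}^{\perp}$, and since $(C_{1}^{\perp})^{\perp} = C_{1}$ the minimal value equals $\|P_{C_{1}}(y - P_{C_{2}}(y))\|$. Because $y \in C_{1}$ and $P_{C_{1}}$ is linear, this simplifies to $\|y - P_{C_{1}}(P_{C_{2}}(y))\|$, giving the bound (in fact, equality). The only technically delicate point is step two — the gradient formula for the squared distance to the nonconvex set $C_{2}$; once Proposition \ref{d2} is invoked to reduce to the single-valued regime on $\R^{M\times 2}\setminus D$, however, it becomes a routine envelope calculation.
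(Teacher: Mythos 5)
Your argument is correct and follows the same architecture as the paper's proof: split $f$ into $\delta_{C_1}+\frac{1}{2}d_{C_2}^2$, identify $\partial\delta_{C_1}(y)=C_1^\perp$, show the squared distance term is $C^1$ off $D$ with gradient $y-P_{C_2}(y)$, combine via the sum rule from \cite[Exercise 8.8]{rockafellar}, and then estimate the distance of $0$ to the resulting affine set by projecting onto $C_1$. The one place you genuinely diverge is the justification of the gradient formula for $\frac{1}{2}d_{C_2}^2$: the paper imports the limiting-subgradient formula $\partial\bigl(\frac{1}{2}d_{C_2}^2\bigr)(y)=y-\overline{P}_{C_2}(y)$ from \cite{DistSqu}, specializes it to a singleton outside $D$ via Proposition \ref{d2}, and gets continuous differentiability from the explicit smoothness of the map $x\mapsto x/\abs{x}$ in \eqref{def:PC2}; you instead run a self-contained Moreau-envelope sandwich (upper bound from suboptimality of $P_{C_2}(y)$ at nearby points, lower bound from a selection of $\overline{P}_{C_2}(z)$ plus continuity of the single-valued projection on the open set $\R^{M\times 2}\setminus D$). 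Your route avoids the external citation at the cost of a short computation, and it is sound because $D$ is closed so single-valuedness and continuity of $P_{C_2}$ hold on a full neighborhood. A small bonus of your version: by computing $\min_{w\in C_1^\perp}\norm{w+y-P_{C_2}(y)}=\norm{P_{C_1}(y-P_{C_2}(y))}=\norm{y-P_{C_1}(P_{C_2}(y))}$ you obtain the distance estimate as an equality, whereas the paper only exhibits one feasible $z\in C_1^\perp$ to get the inequality.
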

Here for simplicity, we keep the notation $f$ for the objective function and write $f( {y})$ when it maps from $\R^{M \times 2}$ to $\overline{\R}$, and $f(c)$ when $f \colon \C^{M}\to \overline{\R}$.

Before we introduce the KL-property, we have to define the class of concave and continuous functions.
\begin{defi}
Let $\eta \in (0,+\infty]$. We denote by $\Phi_\eta$ all function $\phi: [0,\eta)\to [0,+\infty)$ which satisfy the following conditions
\begin{enumerate}
    \item $\phi(0)=0$
    \item $\phi$ is $C^1$ on $(0,\eta)$ and continuous at $0$
    \item for all $s\in (0,\eta): \quad \phi'(s)>0$
\end{enumerate}
\end{defi}
We state the definition of the KL-property, which can be used to prove convergence iterates of first-order and second-order methods, with nonconvex objective functions {\cite{Attouch-Bolte}}. { Intuitively speaking, functions that satisfy this property are not too flat at their respective local minimizers and critical points.}
For $b>a$, we will write $\llbracket a<F<b \rrbracket$ to denote the level set $\{ u \in \R^{M \times 2}: a<F(u)<b \}$ of a function $F:\R^{M \times 2}\to  \overline{\R}$.

\begin{defi}Let $F:\R^{M \times 2}\to \overline{\R}$ be proper and lower semicontinuous. $\partial F$ denotes the subdifferential of $F$. The function $F$ is said to have the \emph{Kurdyka–\L{}ojasiewicz-property} at $\overline{u}\in \dom \partial F $ if there exist $\eta \in (0,+\infty]$, a neighborhood $U$ of $\overline{u}$ and a function $\phi \in \Phi_\eta$, such that for all 
\begin{align*}
    u\in U \cap \llbracket F(\overline{u})<F<F(\overline{u})+\eta \rrbracket,
\end{align*}
the following holds
\begin{align*}
    \phi'(F(u)-F(\overline{u})) d_{\partial F(u)}(0)\geq 1.
\end{align*}
The function $\phi$ is called the desingularizing function.
\end{defi}

The following result, taken from \cite[Lemma 6]{Bolte-Sabach-Teboulle}, {provides a uniformized KL property on a neighborhood and } will be crucial in our convergence analysis.
\begin{lem}
	\label{lem:uniformized}
	Let $\Omega$ be a compact set and $F \colon \R^{M \times 2}\to \overline{\R}$ be a proper and lower semicontinuous function. Assume that $F$ is constant on $\Omega$ and satisfies the KL property at each point of $\Omega$. Then there exist $\varepsilon > 0, \eta > 0$ 
	and $\varphi \in \Phi_{\eta}$ such that for every $\overline{u} \in \Omega$ and  every element $u$ in the intersection
	\begin{equation*}
	\left\lbrace u \in \R^{M \times 2} \colon d_{\Omega}(u)< \varepsilon \right\rbrace \cap \llbracket F \left( \overline{u} \right) < F  < F \left( \overline{u} \right) + \eta \rrbracket
	\end{equation*}
	the following holds:
	\begin{equation*}
	\varphi ' \left( F \left( u \right) - F \left( \overline{u} \right) \right) d_{\partial F(u)}(0) \geq 1 .
	\end{equation*}
\end{lem}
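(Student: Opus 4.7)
The plan is to exploit compactness of $\Omega$ together with the fact that $F$ is constant on $\Omega$, say $F \equiv F^{*}$. The latter is crucial: the level set $\llbracket F(\overline{u}) < F < F(\overline{u}) + \eta \rrbracket$ is then independent of the choice of $\overline{u} \in \Omega$, so I only need to obtain uniform versions of the neighborhood $U$, the exponent $\eta$, and the desingularizing function $\varphi$.

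First, I would apply the KL property pointwise: for every $\overline{u} \in \Omega$ there exist $\varepsilon_{\overline{u}} > 0$, $\eta_{\overline{u}} > 0$ and $\varphi_{\overline{u}} \in \Phi_{\eta_{\overline{u}}}$ such that the KL inequality holds on $B(\overline{u}, \varepsilon_{\overline{u}}) \cap \llbracket F^{*} < F < F^{*} + \eta_{\overline{u}} \rrbracket$. The half-radius balls $\{B(\overline{u}, \varepsilon_{\overline{u}}/2)\}_{\overline{u} \in \Omega}$ form an open cover of the compact set $\Omega$, so I extract a finite subcover indexed by $u_{1}, \dots, u_{p}$ with parameters $\varepsilon_{i}, \eta_{i}, \varphi_{i}$.

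Second, I would set $\varepsilon := \min_{i} \varepsilon_{i}/2$, $\eta := \min_{i} \eta_{i}$ and define the uniform desingularizing function as the sum $\varphi := \sum_{i=1}^{p} \varphi_{i}$. A short check shows $\varphi \in \Phi_{\eta}$: it vanishes at $0$, is $C^{1}$ on $(0, \eta)$, continuous at $0$, and has strictly positive derivative on $(0, \eta)$, since each summand has these properties and they are preserved under finite sums. The key inequality $\varphi'(s) \geq \varphi_{i}'(s)$ for every $i$ and every $s \in (0, \eta)$ follows immediately from the non-negativity of each $\varphi_{j}'$.

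Third, I would verify the claim directly. Pick any $\overline{u} \in \Omega$ and any $u$ in the prescribed intersection. Since $d_{\Omega}(u) < \varepsilon$, there exists $\overline{v} \in \Omega$ with $\|u - \overline{v}\| < \varepsilon$, and $\overline{v}$ belongs to some $B(u_{j}, \varepsilon_{j}/2)$ by the subcover property. The triangle inequality then yields $\|u - u_{j}\| < \varepsilon + \varepsilon_{j}/2 \leq \varepsilon_{j}$, so $u \in B(u_{j}, \varepsilon_{j})$. Because $F$ is constant on $\Omega$, $F(u_{j}) = F^{*} = F(\overline{u})$, and by construction $F^{*} < F(u) < F^{*} + \eta \leq F^{*} + \eta_{j}$; thus the local KL inequality at $u_{j}$ applies and gives $\varphi_{j}'(F(u) - F(\overline{u})) \, d_{\partial F(u)}(0) \geq 1$. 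Combining with $\varphi' \geq \varphi_{j}'$ produces the uniform inequality. There is no real obstacle here beyond bookkeeping; the only delicate point is the half-radius trick, which is what lets any point within $\varepsilon$ of $\Omega$ be placed inside one of the full-radius KL balls, and the verification that the finite sum of desingularizing functions retains membership in $\Phi_{\eta}$.
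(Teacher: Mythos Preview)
Your argument is correct and is precisely the standard compactness-plus-summation proof of the uniformized KL lemma. Note, however, that the paper does not give its own proof of this statement: it is quoted verbatim as \cite[Lemma~6]{Bolte-Sabach-Teboulle} and used as a black box, so there is nothing in the paper to compare against beyond the citation itself. Your proof coincides with the one in that reference (finite subcover via half-radius balls, $\varepsilon=\min_i \varepsilon_i/2$, $\eta=\min_i \eta_i$, and $\varphi=\sum_i \varphi_i$), so the approaches agree.
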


The KL-property holds for a broad class of functions, especially for the indicator functions and distance functions of semi-algebraic sets as stated in \cite{SemiAlg}. Since $C_1$ as a linear subspace is algebraic and $C_2$ is algebraic as we see in \eqref{C2alg}, we know that our objective function $f$ in \eqref{obj} has the KL-property.


For the proof of the convergence of the iterates, we introduce the regularized version of $f$, namely $F_K: \R^{M \times 2} \times \R^{M \times 2}$  defined as
\begin{align*}
    F_K(y,z)= \delta_{C_1}(y)+\frac{1}{2}d^2_{C_2}(y) + \frac{K}{2\alpha^2} \norm{y-z}^2,
\end{align*}
with $K>0$. Then by Proposition 10.5 and Corollary 10.9 of \cite{rockafellar} the formula of the subdifferential $\partial F_K: \R^{M \times 2}\times \R^{M \times 2} \rightrightarrows \R^{M \times 2}\times \R^{M \times 2}$ is given as
\begin{align} \label{eq:subdifF}
    \partial F_K(y,z) =  \left\lbrace \partial f(y) + \frac{K}{\alpha^2}(y-z)\right\rbrace \times \left\lbrace \frac{K}{\alpha^2}(z-y) \right\rbrace.
\end{align}
Furthermore, by \cite{LiPong} we know that $F_K$ inherits the KL-property from $f$. Simple calculations show that $F$ is non-increasing, and the distance of the subgradient to 0 can be estimated from above.

\begin{prop} Let $(c_n)_{n\in \N}$, $(d_n)_{n\in \N}$ and $(t_n)_{n\in \N}$ be the sequence generated by \hyperref[alg:AGLA]{AGLA} and assume that \eqref{cond1} and \eqref{cond} hold. Then the following statements are true:
\begin{enumerate}
    \item There exist a constant $\kappa_1>0$ such that for all $n\in\N$ it holds
\begin{align}
    F_{K_2}(c_n,t_n)+\kappa_1\Delta_{t_n}^2 & \leq F_{K_2}(c_{n-1},t_{n-1}), \label{c1}
\end{align} 
where $K_2$ is defined as in Theorem \ref{thm:Main}.
    \item If $c_n \in D$ {for at most finitely many $n\in \N$}, where $D$ is defined as in Proposition \ref{prop:Subdiff}, then there exist {an integer} $m\in \N$ and a constant $\kappa_2>0$ such that for all $n\geq m$ it holds
\begin{align}
    d_{\partial F_{K_2}(c_n,t_n)}(0) & \leq \kappa_2(\Delta_{t_{n+1}}+\Delta_{t_n}).\label{c2}
\end{align}
\end{enumerate}
\label{prop:Freg}
\end{prop}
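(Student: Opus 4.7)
My plan is to derive both statements from the algebraic structure of $F_{K_2}$ combined with identities already established for the iterates of \hyperref[alg:AGLA]{AGLA}. For part (i), I would first observe that $c_n \in C_1$, so $\delta_{C_1}(c_n) = 0$, and by \eqref{i ct} we have $\|c_n - t_n\|^2 = \alpha^2 \Delta_{t_n}^2$. Substituting into the definition of $F_{K_2}$ gives
\[
F_{K_2}(c_n, t_n) = \tfrac{1}{2} d_{C_2}^2(c_n) + \tfrac{K_2}{2}\Delta_{t_n}^2 .
\]
Theorem \ref{thm:Main}(i) then yields the descent property \eqref{c1} with $\kappa_1 := (K_1 - K_2)/2 > 0$ after a trivial rearrangement.

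For part (ii), the assumption that $c_n \in D$ for at most finitely many $n$ produces an integer $m$ such that $c_n \in C_1 \setminus D$ for every $n \geq m$, so Proposition \ref{prop:Subdiff} is applicable for such $n$. Using the product-space formula \eqref{eq:subdifF}, any element of $\partial F_{K_2}(c_n, t_n)$ has the form $\bigl( v + \tfrac{K_2}{\alpha^2}(c_n - t_n),\, \tfrac{K_2}{\alpha^2}(t_n - c_n) \bigr)$ with $v \in \partial f(c_n)$. Choosing $v$ to minimize $\|v\|$, applying the triangle inequality, bounding $\|v\|$ via Proposition \ref{prop:Subdiff}, and using $\|c_n - t_n\| = \alpha \Delta_{t_n}$ yields
\[
d_{\partial F_{K_2}(c_n, t_n)}(0) \leq \|c_n - P_{C_1}(P_{C_2}(c_n))\| + \tfrac{2 K_2}{\alpha}\Delta_{t_n} .
\]

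The real work is then to rewrite the remaining term $c_n - P_{C_1}(P_{C_2}(c_n))$ in terms of $\Delta_{t_n}$ and $\Delta_{t_{n+1}}$. Solving the update rule $t_{n+1} = (1-\gamma) d_n + \gamma P_{C_1}(P_{C_2}(c_n))$ for the projection and substituting the defining expressions for $c_n$ and $d_n$ in terms of $(t_n, t_{n-1})$, a short calculation produces
\[
c_n - P_{C_1}(P_{C_2}(c_n)) = -\tfrac{1}{\gamma}(t_{n+1} - t_n) + \Bigl(\alpha - \tfrac{\beta(\gamma-1)}{\gamma}\Bigr)(t_n - t_{n-1}) ,
\]
whose norm is at most $\tfrac{1}{\gamma}\Delta_{t_{n+1}} + \bigl|\alpha - \tfrac{\beta(\gamma-1)}{\gamma}\bigr|\Delta_{t_n}$. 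Plugging this into the bound above and setting $\kappa_2$ to be the maximum of $\tfrac{1}{\gamma}$ and $\bigl|\alpha - \tfrac{\beta(\gamma-1)}{\gamma}\bigr| + \tfrac{2 K_2}{\alpha}$ gives \eqref{c2}. I expect the only mildly subtle point to be the need to select the minimum-norm representative $v$ modulo $C_1^\perp$ when invoking Proposition \ref{prop:Subdiff}, rather than using an arbitrary subgradient; the rest is routine bookkeeping.
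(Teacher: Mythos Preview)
Your proposal is correct and follows essentially the same route as the paper's proof. Part (i) is identical: both arguments reduce $F_{K_2}(c_n,t_n)$ to $\tfrac12 d_{C_2}^2(c_n)+\tfrac{K_2}{2}\Delta_{t_n}^2$ via \eqref{i ct} and then invoke the descent inequality of Theorem~\ref{thm:Main}, yielding $\kappa_1=(K_1-K_2)/2$. For part (ii) the only cosmetic difference is that the paper first writes $c_n-P_{C_1}(P_{C_2}(c_n))=\tfrac{1}{\gamma}(c_n-t_{n+1})+\tfrac{\gamma-1}{\gamma}(c_n-d_n)$ via \eqref{PR1.5} and then substitutes $c_n-d_n$ in terms of $c_n-t_n$, whereas you expand directly in the basis $(t_{n+1}-t_n,\,t_n-t_{n-1})$; the resulting bounds and the final $\kappa_2$ differ only by harmless constants.
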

The proof of this proposition can be found in the appendix. 
Now we can state the proof that the generated iterates of \hyperref[alg:AGLA]{AGLA} converge by using the KL-property and the previous observations.

\begin{thm} \label{thm:convIT}
Let $(c_n)_{n\in \N}$, $(d_n)_{n\in \N}$ and $(t_n)_{n\in \N}$ be the sequence generated by \hyperref[alg:AGLA]{AGLA}. Assume that \eqref{cond1} and \eqref{cond} hold
and that $c_n \in D$  {for at most finitely many $n\in \N$}, where $D$ is defined as in Proposition \ref{prop:Subdiff}. Then $(c_n)_{n\in \N}$ converges and the limit $c^*$ is a critical point of $f$, if $c^*\notin D$. 
\end{thm}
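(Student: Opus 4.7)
The plan is to apply the Attouch--Bolte convergence scheme to the regularized function $F_{K_{2}}$, feeding the descent inequality \eqref{c1} and the subgradient estimate \eqref{c2} of Proposition~\ref{prop:Freg} into the uniformized KL property of Lemma~\ref{lem:uniformized}. The target estimate is $\sum_{n\geq 1}\Delta_{t_n}<+\infty$; this makes $(t_n)_{n\in\N}$ Cauchy, and since $\norm{c_n-t_n}=\alpha\,\Delta_{t_n}\to 0$ by \eqref{i ct}, the sequence $(c_n)_{n\in\N}$ inherits the same limit.

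Setting $u_n:=(c_n,t_n)$, I let $\Omega$ denote the set of cluster points of $(u_n)_{n\in\N}$. Theorem~\ref{thm:Main} gives boundedness of $(c_n)$ and $\Delta_{t_n}\to 0$, so $\Omega$ is nonempty and compact, and each of its elements has the diagonal form $(c^{\ast},c^{\ast})$ with $c^{\ast}$ a cluster point of $(c_n)$. The descent inequality \eqref{c1} makes $(F_{K_{2}}(u_n))_{n\in\N}$ non-increasing and bounded below by zero, hence convergent to some $F^{\ast}\in\R$. Because all iterates lie in $C_1$ and $d_{C_2}^{2}$ is globally continuous by \eqref{dc2}, $F_{K_{2}}$ is continuous on $C_1\times\R^{M\times 2}$, which contains $\Omega$; passing to subsequences yields $F_{K_{2}}\equiv F^{\ast}$ on~$\Omega$.

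Next I invoke Lemma~\ref{lem:uniformized} with $F=F_{K_{2}}$ to produce a single desingularizing function $\varphi\in\Phi_\eta$ valid on a uniform $\epsilon$-neighborhood of $\Omega$. Since $d_\Omega(u_n)\to 0$ and $F_{K_{2}}(u_n)\to F^{\ast}$, the KL inequality becomes operational from some index $n_0$ onward, and combined with \eqref{c2} it reads
\begin{align*}
\varphi'\bigl(F_{K_{2}}(u_n)-F^{\ast}\bigr)\,\kappa_{2}\,(\Delta_{t_{n+1}}+\Delta_{t_n})\geq 1 .
\end{align*}
Using the concavity of $\varphi$ together with the descent inequality \eqref{c1} produces the telescoping estimate
\begin{align*}
\Delta_{t_n}^{2}\leq \frac{\kappa_{2}}{\kappa_{1}}\bigl[\varphi(F_{K_{2}}(u_n)-F^{\ast})-\varphi(F_{K_{2}}(u_{n+1})-F^{\ast})\bigr](\Delta_{t_{n+1}}+\Delta_{t_n}),
\end{align*}
from which Young's inequality and summation give $\sum_{n\geq n_0}\Delta_{t_n}<+\infty$ by the now-standard argument. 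The limit $c^{\ast}$ then satisfies $P_{C_1}(P_{C_2}(c^{\ast}))=c^{\ast}$ by Theorem~\ref{thm:Main}(ii); equivalently $P_{C_2}(c^{\ast})-c^{\ast}\in C_1^{\perp}$, so under the additional hypothesis $c^{\ast}\notin D$ the subgradient formula of Proposition~\ref{prop:Subdiff} yields $0\in\partial f(c^{\ast})$.

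The main obstacle is the housekeeping around the singular set $D$: the formula for $\partial f$, and consequently the subgradient estimate \eqref{c2}, are only valid outside $D$, which is precisely why the hypothesis ``$c_n\in D$ for at most finitely many $n$'' is imposed (so that \eqref{c2} is active from some index on), and why the critical-point conclusion is conditional on $c^{\ast}\notin D$. One must also verify that discarding the initial finite segment of bad iterates does not break the summability argument; this is harmless because only the tail sums matter for the Cauchy conclusion.
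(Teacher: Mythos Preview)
Your proposal is correct and follows essentially the same Attouch--Bolte/KL scheme as the paper's proof: identify the cluster set $\Omega$ of $(c_n,t_n)$, apply the uniformized KL property of Lemma~\ref{lem:uniformized} to $F_{K_2}$, combine the resulting inequality with \eqref{c1} and \eqref{c2} via the concavity of $\varphi$, and telescope to obtain $\sum_n \Delta_{t_n} < +\infty$, whence $(t_n)$ and then $(c_n)$ converge. The only cosmetic differences are that the paper packages your ``Young's inequality'' step as the elementary Lemma~\ref{lem:ABC}, and that your displayed telescoping estimate should carry $\Delta_{t_{n+1}}^2$ rather than $\Delta_{t_n}^2$ on the left (since \eqref{c1} yields $F_{K_2}(u_n)-F_{K_2}(u_{n+1})\geq \kappa_1\Delta_{t_{n+1}}^2$); neither point affects the argument.
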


\begin{proof}
For simplicity we will denote $F_n=F_{K_2}(c_n,t_n)$ and $\lim_{n\to \infty}F_n=F^*$, which exists by Theorem \ref{thm:Main}. For any cluster point $c^*$ of the sequence $(c_n)_{n\in\N}$, we see that $F(c^*)=F^*$ by Theorem \ref{thm:Main}.
If there exists an integer $n_0 \in \N$ such that $F_{n_0}=F^*$, then the inequality \eqref{c1} implies that $\Delta_{t_n}=0$ for all $n \geq n_0$. Hence by the definition of the algorithm and \eqref{i ct}-\eqref{i td}, the sequences $(t_n)_{n\geq n_0}$, $(d_n)_{n\geq n_0}$ and $(c_n)_{n\geq n_0}$ are constant and the statement is proven.

Now assume that $F_{n}>F^*$ for all $n \in \N$. 
If $c_n \in D$ for at most finitely many $n\in \N$, then we can choose $m\in \N$ such that for all $n\geq m$ the vector $c_n\notin D$.
\color{black}


From Theorem \ref{thm:Main}, we know that the sequence $((c_n,t_n))_{n\in\N}$ is bounded, and therefore there exist cluster points.
Let us denote by $\Omega$ the set of all cluster points of the sequence $((c_n,t_n))_{n\in\N}$.
We can see that $\Omega$ is closed and also bounded. Moreover, the value of $F$ over $\Omega$ always equals $F^*$.
Since $F$ has the KL-property, according to Lemma \ref{lem:uniformized}, there exist $\varepsilon, \eta > 0$ and a function $\phi \in \Phi_\eta$ such that all element $(y,z)$ in the intersection
\begin{multline}
\left\lbrace (y,z) \in \R^{M \times 2} \times \R^{M \times 2} \colon d_{\Omega}((y,z))< \varepsilon \right\rbrace \\
\cap \llbracket F^{*} < F_{K_{2}} < F^{*} + \eta \rrbracket \label{intersection-all}    
\end{multline}
it holds
\begin{equation}
\label{KL-inequality}
\varphi ' \left( F_{K_{2}} \left( (y,z) \right) - F^{*} \right) d_{\partial F_{K_{2}} \left( (y,z) \right)}(0) \geq 1 .
\end{equation}

Since $F_n$ converges to $F^*$ and $d_{\partial F_{K_2}(c_n,t_n)}(0) \to 0$ as $n \to + \infty$ due to Proposition \ref{prop:Freg}, there exists $n_1\geq m$ such that $(c_n,t_n)$ belongs to the intersection \eqref{intersection-all} for all $n \geq n_1$.
This means by \eqref{KL-inequality} and \eqref{c2}, the inequality
\begin{multline}
    \kappa_2 \phi'(F_n-F^*) (\Delta_{t_{n+1}}+\Delta_{t_n}) \\
    \geq \phi'(F_n-F^*)d_{\partial F_n}(0) \geq 1 , \label{c3}
\end{multline}
    holds for all $n \geq n_1.$  
\color{black}    
    Since $\phi$ is concave and differentiable, we know that 
    \begin{align*}
        \phi(z)-\phi(y)\geq \phi'(z)(z-y).
    \end{align*}
    By choosing $z=F_n-F^*$ and $y=F_{n+1}-F^*$ then plugging \eqref{c1} and \eqref{c3}, we obtain after some rearranging
 \begin{multline*}
     \phi(F_n-F^*)-\phi(F_{n+1}-F^*) \geq  \frac{\kappa_1}{\kappa_2} \frac{\Delta^2_{t_{n+1}}}{\Delta_{t_{n+1}}+ \Delta_{t_{n}}}.
 \end{multline*}
 holds for $n\geq n_1$. By applying Lemma \ref{lem:ABC} in the appendix we can see that this implies that
 \begin{align*}
     \frac{9\kappa_2}{4\kappa_1}\left(\phi(F_n-F^*)-\phi(F_{n+1}-F^*) \right)\geq   2\Delta_{t_{n+1}}-\Delta_{t_{n}}
 \end{align*}
 holds. Summing up this inequality for  $j=n_1,\dots,\Bar{n}$ leads to
  \begin{multline*}
      \sum_{j=n_1}^{\Bar{n}}\Delta_{t_{j}+1}   \leq \frac{9\kappa_2}{4\kappa_1}\left(\phi(F_{n_1}-F^*)-\phi(F_{\Bar{n}+1}-F^*)\right)  \\
      + { \Delta_{t_{n_1}}}-\Delta_{t_{\Bar{n}+1}}.
 \end{multline*}
Since {$\phi$ is nonnegative, by passing $\Bar{n}\to \infty$, we deduce}
  \begin{align*}
      \sum_{j=n_1}^{+\infty}\Delta_{t_j} \leq   \frac{9\kappa_2}{4\kappa_1}\phi(F_{n_1}-F^*)+ { \Delta_{t_{n_1}}}< +\infty.
 \end{align*}
 This implies that $(t_n)_{n\in \N}$ is a Cauchy-sequence and therefore converges. Since $(c_n)_{n\in \N}$ and $(d_n)_{n\in \N}$ are linear combinations of $(t_n)_{n\in \N}$, we conclude that these series converge as well and the limit of $(c_n)_{n\in \N}$ {has to be $c^*$ according to {Theorem \ref{thm:Main}}.} Furthermore $0\in \partial f(c^*)$ by Proposition \ref{prop:Subdiff} if $c^*\notin D.$
\end{proof}

Since \hyperref[alg:AGLA]{AGLA} with $\gamma=1$ reduces to \hyperref[alg:FGLA]{FGLA}, we can state the following convergence properties of \hyperref[alg:FGLA]{FGLA} {to complement the result in \cite{FGLA}. It is a direct consequence of Theorem \ref{thm:Main} and \ref{thm:convIT}. 
\begin{cor}
   Let $(c_n)_{n\in \N}$ be the sequences generated by \hyperref[alg:FGLA]{FGLA}. If $\alpha \in (0,0.5)$, then
\begin{enumerate}
    \item {There exist constants $K_1>K_2>0$ such that the following descent property holds for all $n\geq 1$
    \begin{align*}
    d_{C_2}^2(c_n)+K_1\Delta_{t_n}^2  &\leq d_{C_2}^2(c_{n-1})+K_2\Delta_{t_{n-1}}^2,
\end{align*}
where $\Delta_{t_n}:=\norm{t_n-t_{n-1}}$. Therefore $\Delta_{t_n}$ converges to zero and $\lim_{n \to + \infty} d_{C_2}^2(c_n) \in \R$ exists. 
\item Furthermore $(c_n)_{n\in \N}$ is a bounded sequence and every convergent subsequence converges to a point $c^*$, which fulfills}
\begin{align*}
    P_{C_1}(P_{C_2}(c^*))=c^*
\end{align*}
and is therefore a critical point if $c^*\notin D$, where $D$ is defined as in Proposition \ref{prop:Subdiff}.
\item If $c_n \in D$ {for at most finitely many $n\in \N$}, then $(c_n)_{n\in\N}$ is a convergent sequence. 
\end{enumerate} 
\end{cor}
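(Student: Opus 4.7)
The plan is to derive this corollary as a direct specialization of Theorem~\ref{thm:Main} and Theorem~\ref{thm:convIT} to the parameter choice $\gamma = 1$. My first step would be to observe that when $\gamma = 1$, the first update of \hyperref[alg:AGLA]{AGLA} simplifies to $t_n = P_{C_1}(P_{C_2}(c_{n-1}))$, so the auxiliary sequence $(d_n)_{n\in\N}$ never enters into the computation of $(t_n)_{n\in\N}$ or $(c_n)_{n\in\N}$. Consequently, for any admissible $\beta$---say $\beta = 0$---the sequences $(t_n)$ and $(c_n)$ produced by \hyperref[alg:AGLA]{AGLA} coincide exactly with those of \hyperref[alg:FGLA]{FGLA} with inertial parameter $\alpha$, as is already noted in the paragraph following the AGLA pseudocode.

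Next I would verify that the parameter hypotheses \eqref{cond1} and \eqref{cond} of Theorem~\ref{thm:Main} are satisfied under $\alpha \in (0, 1/2)$ and $\gamma = 1$. Substituting $\gamma = 1$, condition \eqref{cond1} collapses to $0 < 1 < 2$ and $0 \leq 2\beta\,|1-1| < 1$, both of which hold trivially for any $\beta \geq 0$. The first branch of \eqref{cond}, which applies since $\gamma \leq 1$, reduces to $\alpha < (1-1)\beta + 1 - 1/2 = 1/2$. Thus the single assumption $\alpha \in (0, 1/2)$ places us within the scope of both theorems. For sanity I would also note that the descent constants from Case~1 of the proof of Theorem~\ref{thm:Main} simplify to $K_1 = 1 - \alpha - \alpha^2$ and $K_2 = \alpha - \alpha^2$ when $\gamma = 1$, and the relations $K_1 > K_2 > 0$ are immediately equivalent to $\alpha \in (0, 1/2)$.

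Having secured the hypotheses, the three assertions follow at once. Items (i) and (ii) are direct restatements of Theorem~\ref{thm:Main}: the descent inequality together with telescoping yields $\Delta_{t_n} \to 0$, existence of $\lim_{n\to+\infty} d_{C_2}^2(c_n)$, and boundedness of $(c_n)_{n\in\N}$; the identity $P_{C_1}(P_{C_2}(c^*)) = c^*$ for every cluster point is exactly the conclusion of part (ii) of Theorem~\ref{thm:Main}, and Proposition~\ref{prop:Subdiff} then identifies such a $c^*$ as a critical point of $f$ whenever $c^* \notin D$. Item (iii) is a direct application of Theorem~\ref{thm:convIT} under the assumption that $c_n \in D$ for at most finitely many $n$.

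The argument is essentially bookkeeping, so no genuine obstacle arises once the equivalence of AGLA at $\gamma = 1$ with FGLA is spelled out and the parameter ranges are checked; the only mild care needed is verifying that $K_1$ and $K_2$ obtained from the proof of Theorem~\ref{thm:Main} remain strictly positive and well-ordered after the substitution.
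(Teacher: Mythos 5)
Your proposal is correct and follows exactly the paper's route: the corollary is obtained by specializing Theorems \ref{thm:Main} and \ref{thm:convIT} to $\gamma=1$, where \eqref{cond1} holds trivially, \eqref{cond} reduces to $\alpha<1/2$, and the constants become $K_1=1-\alpha-\alpha^2$, $K_2=\alpha-\alpha^2$. The verification that $K_1>K_2>0$ on $(0,1/2)$ and the appeal to Proposition \ref{prop:Subdiff} for the critical-point claim are precisely the bookkeeping the paper intends.
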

To summarize our results, we have established conditions and parameter regimes for \hyperref[alg:AGLA]{AGLA} and \hyperref[alg:FGLA]{FGLA} that guarantee that the generated sequences $(c_n)_{n\in \N}$ assert a decreasing property and that every cluster point $c^*\notin D$ is a local minimizer of our objective function. To guarantee convergence, we needed the condition $c_n \in D$ for at most finitely many $n\in \N$, since outside of $D$ the projection $\overline{P}_{C_2}$ is unique and $\frac{1}{2}d_{C_2}^2$ is continuously differentiable. 
It is important to mention that the inequalities used in the statements above do not cover all possible choices of parameters for which we have observed convergence in the simulations. 
{A similar situation occurs even if $\frac{1}{2}d_{C_2}^2$ is assumed to have Lipschitz-continuous gradient, see \cite{Bot-Csetnek-Laszlo,Laszlo}. Notice that Lipschitz-continuity does not hold in our model, according to Proposition \ref{prop:Subdiff}.}

Most importantly, our proofs show that for \hyperref[alg:AGLA]{AGLA} and \hyperref[alg:FGLA]{FGLA} a good performance and minimizing properties for the phase retrieval problem can be guaranteed, when the parameters are well chosen.


\begin{figure}[h!]
    \centering
    \includegraphics[width=0.95\linewidth]{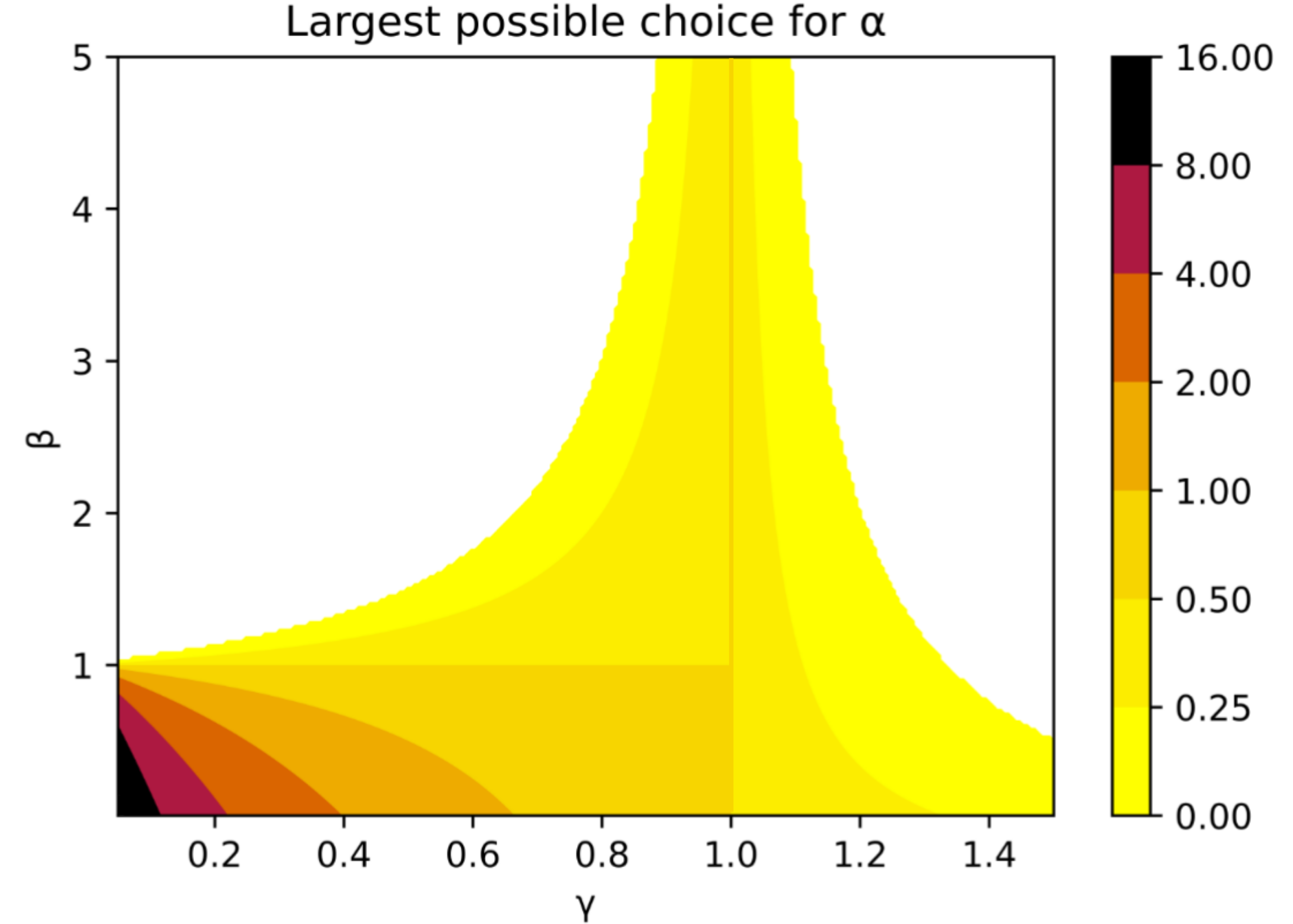}
    \caption{Largest possible choice for $\alpha$ in \hyperref[alg:AGLA]{AGLA} for given $\beta$ and $\gamma$ based on the conditions \eqref{cond1} and \eqref{cond} that guarantee convergence}
    \label{fig:parameter}
\end{figure}

In Figure \ref{fig:parameter} we plotted for fixed $\beta\in [0,5]$ and $\gamma \in [0.1,1.5]$ the upper bound for $\alpha$ s.t. $\alpha,\beta,\gamma $ satisfy the convergence guarantees \eqref{cond1} and \eqref{cond} of Theorem \ref{thm:Main}. The white areas symbolize the combinations of $\beta$ and $\gamma$, for which \eqref{cond1} is not fulfilled, i.e. where we expect \hyperref[alg:AGLA]{AGLA} not to converge. As noticed before, for $\gamma=1$ \hyperref[alg:AGLA]{AGLA} coincides with \hyperref[alg:FGLA]{FGLA} and \eqref{cond} reduces to $\alpha<0.5$ independently of $\beta$, which we can see in the Figure. 

Before we take a look at the numerical simulations we will state a corollary, which motivates for which iterates we are going to plot the function values.
\begin{cor} \label{thm:plot}
    Let $(c_n)_{n\in \N}$ be the sequence generated by \hyperref[alg:AGLA]{AGLA}. Assume that \eqref{cond1} and \eqref{cond} hold {and that $c_n \in D$  {for at most finitely many $n\in \N$}, where $D$ is defined as in Proposition \ref{prop:Subdiff}}. Then for $y_n=P_{C_1}(P_{C_2}(c_n))$ the following properties hold:
    \begin{enumerate}
        
            \item For $n\geq 0$ \begin{align*}
        d_{C_2}^2(y_n)\leq d_{C_2}^2(c_{n})-\norm{y_n-c_{n}}^2. 
    \end{align*}
        \item The sequence $(y_n)_{n\in \N}$  converges as $n \to + \infty$.

    \end{enumerate}

    \begin{proof}
       Let $n\geq 0$. By \eqref{orth.} we know that 
        \begin{align*}
            \norm{y_n-P_{C_2}(c_n) }^2=\norm{c_n-P_{C_2}(c_n)}^2-\norm{y_n-c_n}^2.
        \end{align*}
    By the definition of the distance function we see that $d_{C_2}^2(y_n)\leq \norm{y_n-P_{C_2}(c_n) }^2$ and $d_{C_2}^2(c_n)=\norm{c_n-P_{C_2}(c_n)}^2$ hold and by this (i) is proven. Furthermore by the results of the Theorems \ref{thm:Main} and \ref{thm:convIT} we can deduce by \eqref{PR1.5} that $(y_n)_{n\in \N}$ converges as well.
    \end{proof}
\end{cor}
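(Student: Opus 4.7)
For part (i), my plan is to exploit the fact that $y_n$ is, by definition, the orthogonal projection of $P_{C_2}(c_n)$ onto the linear subspace $C_1$, combined with the observation that $c_n$ itself lies in $C_1$. Applying the Pythagorean identity \eqref{orth.} with $x = P_{C_2}(c_n)$ and $y = c_n - y_n \in C_1$ should yield the decomposition
\begin{equation*}
\norm{c_n - P_{C_2}(c_n)}^2 \;=\; \norm{y_n - P_{C_2}(c_n)}^2 + \norm{c_n - y_n}^2.
\end{equation*}
Using Proposition \ref{d2} to identify $d_{C_2}(c_n) = \norm{c_n - P_{C_2}(c_n)}$, together with the elementary bound $d_{C_2}(y_n) \leq \norm{y_n - P_{C_2}(c_n)}$ (which holds because $P_{C_2}(c_n) \in C_2$), the claimed inequality then follows by rearrangement. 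This step is essentially a direct calculation with no real subtleties.

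For part (ii), the plan is to lean entirely on the convergence results already established in Theorems \ref{thm:Main} and \ref{thm:convIT}. Shifting the index in identity \eqref{PR1.5} gives
\begin{equation*}
y_n \;=\; P_{C_1}(P_{C_2}(c_n)) \;=\; \frac{1}{\gamma}\, t_{n+1} + \frac{\gamma - 1}{\gamma}\, d_n .
\end{equation*}
Theorem \ref{thm:convIT} shows that $(t_n)_{n \in \N}$ is Cauchy and hence convergent, and since $d_n = t_n + \beta(t_n - t_{n-1})$ is an affine combination of consecutive $t_n$'s with $\Delta_{t_n} \to 0$, the sequence $(d_n)_{n \in \N}$ converges to the same limit as $(t_n)_{n \in \N}$. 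The right-hand side of the displayed identity therefore converges, and so does $(y_n)_{n \in \N}$. Moreover, since \eqref{ConvCrit} gives $y_n - c_n \to 0$, the limit of $(y_n)_{n \in \N}$ must coincide with the limit $c^{*}$ of $(c_n)_{n \in \N}$.

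The main difficulty, in my assessment, does not lie in this corollary itself: both parts reduce to bookkeeping once Theorems \ref{thm:Main} and \ref{thm:convIT} are available. The genuine work, namely the KL-based Cauchy argument for $(t_n)_{n \in \N}$ and the preceding descent estimate, was already carried out in those theorems; here I simply transport those conclusions through the definition $y_n = P_{C_1}(P_{C_2}(c_n))$ and the algorithmic identity \eqref{PR1.5}.
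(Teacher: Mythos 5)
Your proposal is correct and follows essentially the same route as the paper: part (i) is the same Pythagorean decomposition via \eqref{orth.} combined with $d_{C_2}(y_n)\leq\norm{y_n-P_{C_2}(c_n)}$ and $d_{C_2}(c_n)=\norm{c_n-P_{C_2}(c_n)}$, and part (ii) is the same appeal to Theorems \ref{thm:Main} and \ref{thm:convIT} through the shifted identity \eqref{PR1.5}. You merely spell out the convergence of $(d_n)_{n\in\N}$ and the identification of the limit with $c^*$, details the paper leaves implicit.
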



\section{Numerical Experiments}
In this section we will present the results of our numerical experiments and test the performance of the algorithm \hyperref[alg:AGLA]{AGLA} for the STFT-spectrogram inversion. As signals we used the \texttt{© EBU Audio Test sequences}, which provide 70 audio files for testing. To reduce the computational time for each signal, we trimmed down every signal to their first two seconds. In this paper we only present the results of some selected signals of this test set. A reproducible research addendum is available at \url{http://ltfat.org/notes/059/}, where we provide the code, from which one can test different configurations of windows and parameters and the results of our parameter testing. It is important to mention, that the presented algorithm and convergence proofs work for a broad range of transformations. In this publication we restrict ourselves to the application in acoustics using a STFT for $T$. 

The simulations were performed with hop size of 32 and 256 FFT bins and a Gaussian window using the LTFAT toolbox \cite{LTFAT}. This choice of hop size and bins ensures that it is possible to reconstruct the signals uniquely from their measurements \cite{Balan}. {For {more} numerical experiments with a different window function, we refer to \cite{ICASSP}.} As a quality measure we used the \textit{Spectrogram Signal to Noise Ratio}, which is defined as
\begin{align*}
    \text{SSNR}(x)=-10\log_{10}\left(\frac{\norm{\abs{Tx}-s}}{\norm{s}}\right).
\end{align*}
Maximizing the SSNR for a signal $x$ is equivalent to minimizing our objective function \eqref{obj}, since $d_{C_2}=\norm{\abs{\cdot}-s}$ for given $s$ and the negative logarithm is monotonely decreasing.

\begin{table}[]
\centering
    \begin{tabular}{|lllc|}
\hline
\multicolumn{4}{|c|}{\textbf{Best 10 combinations with convergence guarantee}\rule{0pt}{10pt}}                                                    \\ \hline
\multicolumn{1}{|c|}{$\alpha$\rule{0pt}{8pt}} & \multicolumn{1}{c|}{$\beta$} & \multicolumn{1}{c|}{$\gamma$} & \multicolumn{1}{c|}{Average SSNR} \\ \hline
\multicolumn{1}{|l|}{0.09\rule{0pt}{7pt}} & \multicolumn{1}{l|}{1.10}  & \multicolumn{1}{l|}{0.20}  & \multicolumn{1}{c|}{9.75505}    \\ \hline
\multicolumn{1}{|l|}{0.60\rule{0pt}{7pt}}  & \multicolumn{1}{l|}{0.65} & \multicolumn{1}{l|}{0.75} & \multicolumn{1}{c|}{9.71289}    \\ \hline
\multicolumn{1}{|l|}{0.70\rule{0pt}{7pt}}  & \multicolumn{1}{l|}{0.50}  & \multicolumn{1}{l|}{0.70}  & \multicolumn{1}{c|}{9.70320}    \\ \hline
\multicolumn{1}{|l|}{0.19\rule{0pt}{7pt}} & \multicolumn{1}{l|}{1.10}  & \multicolumn{1}{l|}{0.25} & \multicolumn{1}{c|}{9.68508}    \\ \hline
\multicolumn{1}{|l|}{0.28\rule{0pt}{7pt}} & \multicolumn{1}{l|}{1.05} & \multicolumn{1}{l|}{0.20}  & \multicolumn{1}{c|}{9.67243}    \\ \hline
\multicolumn{1}{|l|}{0.22\rule{0pt}{7pt}} & \multicolumn{1}{l|}{1.50}  & \multicolumn{1}{l|}{0.65} & \multicolumn{1}{c|}{9.64285}    \\ \hline
\multicolumn{1}{|l|}{0.14\rule{0pt}{7pt}} & \multicolumn{1}{l|}{1.15} & \multicolumn{1}{l|}{0.30}  & \multicolumn{1}{c|}{9.62835}    \\ \hline
\multicolumn{1}{|l|}{0.33\rule{0pt}{7pt}} & \multicolumn{1}{l|}{1.05} & \multicolumn{1}{l|}{0.25} & \multicolumn{1}{c|}{9.60824}    \\ \hline
\multicolumn{1}{|l|}{0.81\rule{0pt}{7pt}} & \multicolumn{1}{l|}{0.40}  & \multicolumn{1}{l|}{0.65} & \multicolumn{1}{c|}{9.58805}    \\ \hline
\multicolumn{1}{|l|}{0.39\rule{0pt}{7pt}} & \multicolumn{1}{l|}{1.90}  & \multicolumn{1}{l|}{0.90}  & \multicolumn{1}{c|}{9.58760}    \\ \hline
\end{tabular}
\caption{The best parameter combinations satisfying the convergence guarantee with respect to average SSNR after 100 iterations over the test set, disregarding the convergence guarantees}
\label{Table2}
\end{table}

Having three parameters in \hyperref[alg:AGLA]{AGLA} rather than one in \hyperref[alg:FGLA]{FGLA} makes the algorithm on the one hand more flexible and adaptable, but on the other hand makes it more difficult to assert, which parameter combination is the best. Therefore we first present a summary of our numerical parameter tests for the parameters of $\alpha$, $\beta$ and $\gamma$. We took ten signals of the test sequence set and let the algorithms run for 100 iterations, initialized with zero-phase, namely $c_0=s$. The signals were chosen to cover a wide range of acoustical signals, including single instruments, human speech and full orchestras. We tested the performance of parameter combinations, satisfying the sufficient conditions to guarantee convergence, and also the best possible combinations, disregarding the convergence guarantee. It is important to note that these results will vary for different choices of window functions and different redundancies of the STFT. A table including the final SSNR for all tested combinations can be found in the research addendum. 

We computed for $\beta \in [0.1,2]$ and $\gamma \in [0.2,1.6]$ the largest possible $\alpha$ up to two decimal points, which satisfies the conditions \eqref{cond1} and \eqref{cond} and tested the parameter combinations for ten different signals. The results are displayed in Table \ref{Table2}.


\begin{table}[]
\centering
\begin{tabular}{|lllc|}
\hline
\multicolumn{4}{|c|}{\textbf{Best 10 combinations over all }\rule{0pt}{10pt}  }                                                                           \\ \hline
\multicolumn{1}{|c|}{$\alpha$\rule{0pt}{8pt}  } & \multicolumn{1}{c|}{$\beta$} & \multicolumn{1}{c|}{$\gamma$} & \multicolumn{1}{c|}{Average SSNR} \\ \hline
\multicolumn{1}{|l|}{1.05 \rule{0pt}{7pt}  }     & \multicolumn{1}{l|}{1.35}    & \multicolumn{1}{l|}{1.25}     & 12.23339                       \\ \hline
\multicolumn{1}{|l|}{0.95\rule{0pt}{7pt}}     & \multicolumn{1}{l|}{1.00}       & \multicolumn{1}{l|}{1.30}      & 12.16774                       \\ \hline
\multicolumn{1}{|l|}{0.95\rule{0pt}{7pt}}     & \multicolumn{1}{l|}{0.99}    & \multicolumn{1}{l|}{1.30}      & 12.08267                       \\ \hline
\multicolumn{1}{|l|}{0.95\rule{0pt}{7pt}}     & \multicolumn{1}{l|}{1.05}    & \multicolumn{1}{l|}{1.30}      & 12.06661                        \\ \hline
\multicolumn{1}{|l|}{1.00\rule{0pt}{7pt}}        & \multicolumn{1}{l|}{1.40}     & \multicolumn{1}{l|}{1.25}     & 11.98671                     \\ \hline
\multicolumn{1}{|l|}{1.05\rule{0pt}{7pt}} & \multicolumn{1}{l|}{1.30}  & \multicolumn{1}{l|}{1.25} & 11.97935 \\ \hline
\multicolumn{1}{|l|}{0.99\rule{0pt}{7pt}} & \multicolumn{1}{l|}{1.00}    & \multicolumn{1}{l|}{1.30}  & 11.89051 \\ \hline
\multicolumn{1}{|l|}{0.99\rule{0pt}{7pt}} & \multicolumn{1}{l|}{0.99} & \multicolumn{1}{l|}{1.30}  & 11.85429 \\ \hline
\multicolumn{1}{|l|}{1.00\rule{0pt}{7pt}}    & \multicolumn{1}{l|}{1.05} & \multicolumn{1}{l|}{1.30}  & 11.85108 \\ \hline
\multicolumn{1}{|l|}{0.99\rule{0pt}{7pt}} & \multicolumn{1}{l|}{1.40}  & \multicolumn{1}{l|}{1.25} & 11.84152 \\ \hline
\end{tabular}
\caption{The best parameter combinations with respect to average SSNR after 100 iterations over the test set}
\label{Table1}
\end{table}

It is interesting to note that all parameter combinations satisfying \eqref{cond1} and \eqref{cond} resulted in a very similar SSNR after 100 iterations. Only for \texttt{Sound 7} the second and third combinations in Table \ref{Table2} converged to a point of significantly larger SSNR. Overall the combinations with $\gamma \approx 0,25$ and $\beta \approx 1,1$ performed over all very well and  $\alpha=0.09$, $\beta=1.1$ and $\gamma=0.2$ was the best combination.

We did the same test considering further 832 combinations with $\alpha \in [0.95,1.25]$, $\beta \in [0.95,1.5]$ and $\gamma \in [0.95,1.3 ]$. Even though these combinations are not covered by the convergence guarantee, they show the best possible convergence of our proposed algorithm \hyperref[alg:AGLA]{AGLA}. In these tests, we noticed that if $\beta$ and $\gamma$ do not fulfill \eqref{cond1}, then the algorithm does not converge. { This observation is included in the research addendum. } It seems optimal to choose $\gamma \approx 1.25$, $\alpha \approx 1$ and $\beta > \alpha$.

We compared \hyperref[alg:AGLA]{AGLA} to two other algorithms, which were observed to perform well in the STFT phase retrieval in \cite{beyond}. The first algorithm is the \emph{Relaxed Averaged Alternating Projections} (\hyperref[alg:RAAR]{RAAR}) proposed by R. Luke for the phase retrieval problem in Diffraction Imaging \cite{Luke}. 
\begin{algorithm}[H]
\caption{Relaxed Averaged Alternating Reflections}\label{alg:RAAR}
\begin{algorithmic}
\STATE 
\STATE {\textsc{Initialize }}$c_0\in \C^n$ and $\lambda \in (0,1]$ \smallskip
\STATE \hspace{0.5cm}$ \textbf{Iterate for } n=1,\dots,N  $
\STATE \hspace{0.5cm}$ c_{n+1}=\frac{\lambda}{2}\left(c_n+R_{C_1}(R_{C_2}(c_n))\right)+(1-\lambda)P_{C_2}(c_n)$\smallskip
\STATE {\textsc{Return }} $T^\dagger c_N$
\end{algorithmic}

\end{algorithm}
According to \cite{Optics} the best performance of \hyperref[alg:RAAR]{RAAR} for speech signals can be expected for $\lambda = 0.9$. 

The second algorithm is the \emph{Difference Map} (\hyperref[alg:DM]{DM}) proposed by V. Elser for phase retrieval in \cite{DM}.

\begin{algorithm}[H]
\caption{Difference Map} \label{alg:DM}
\begin{algorithmic}
\STATE 
\STATE {\textsc{Initialize }}$c_0\in \C^n$ and $\rho \in \R\setminus \{0\}$ \smallskip
\STATE \hspace{0.5cm}$ \textbf{Iterate for } n=1,\dots,N  $
\STATE \hspace{0.5cm}$ t_n = P_{C_2}(c_n)+\frac{1}{\rho}(P_{C_2}(c_n)-c_n)$
\STATE \hspace{0.5cm}$ s_n = P_{C_1}(c_n)+\frac{1}{\rho}(P_{C_1}(c_n)-c_n)$
\STATE \hspace{0.5cm}$ c_{n+1}=c_n+\rho( P_{C_1}(t_n) - P_{C_2}(s_n))$\smallskip
\STATE {\textsc{Return }} $T^\dagger c_N$
\end{algorithmic}

\end{algorithm}

In general choosing $\rho$ close to 1 yields the best performance of \hyperref[alg:DM]{DM} and in \cite{beyond} it was observed that the choice $\rho = 0.8$ performs best. To the best of our knowledge the convergence for \hyperref[alg:DM]{DM} is unproven.

 Based on the observation in Corollary \ref{thm:plot}, we plotted for \hyperref[alg:FGLA]{FGLA} and \hyperref[alg:AGLA]{AGLA} the SSNR of $y_n=P_{C_1}(P_{C_2}(c_n))$ and for \hyperref[alg:RAAR]{RAAR} and \hyperref[alg:DM]{DM} the SSNR of the iterates $c_n$ respectively.

\begin{figure}
    \centering
    \includegraphics[width=0.90\linewidth]{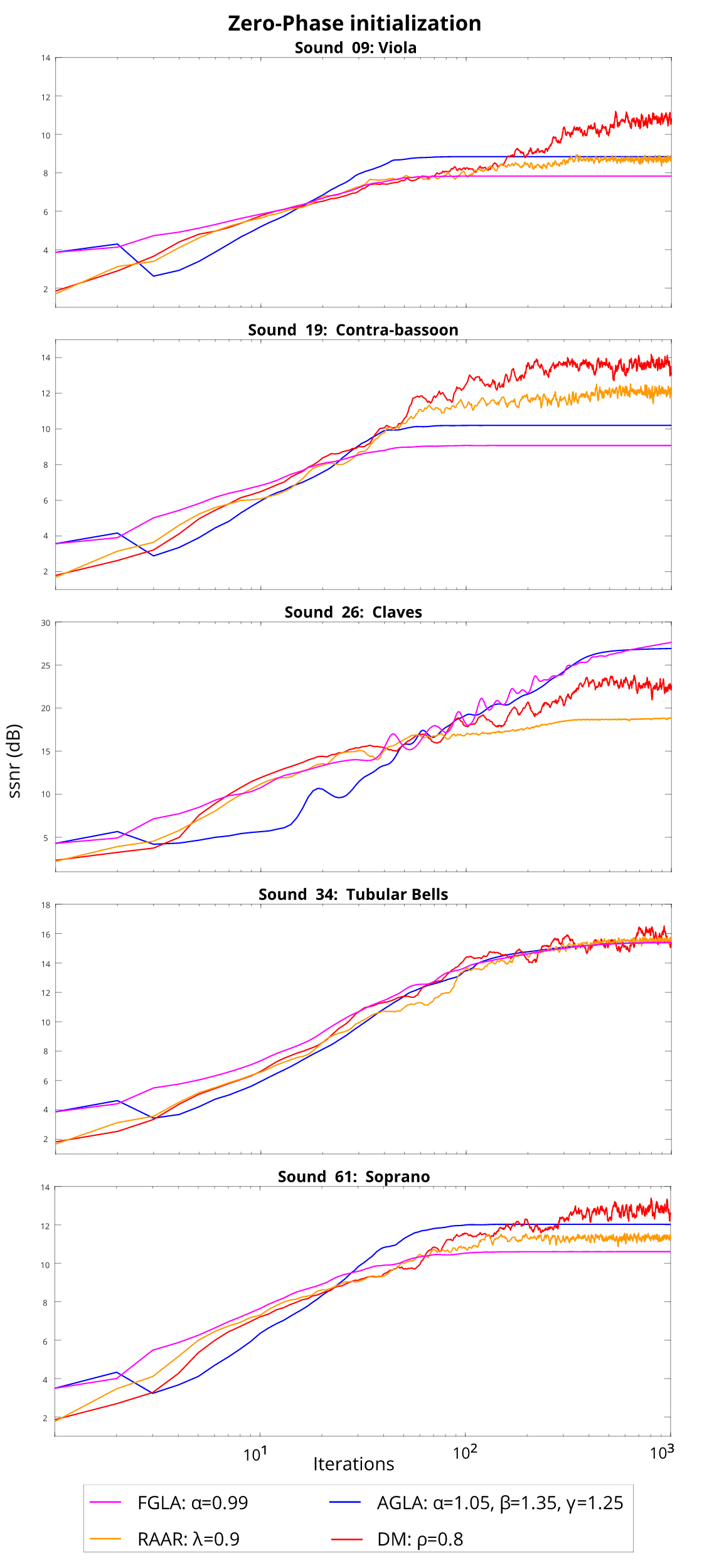}
    \caption{Comparison of \hyperref[alg:FGLA]{FGLA}, \hyperref[alg:AGLA]{AGLA}, \hyperref[alg:RAAR]{RAAR}, and \hyperref[alg:DM]{DM} with their respective best parameter choice with zero-phase initialization}
    \label{ZERO}
    \vspace*{-4mm}
\end{figure}




For the comparison between the algorithms, we started by comparing \hyperref[alg:AGLA]{AGLA}, \hyperref[alg:FGLA]{FGLA}, \hyperref[alg:DM]{DM}, and \hyperref[alg:RAAR]{RAAR} for 1000 iterations with zero-phase initialization, namely, taking $c_0=s$ without any phases. For these simulations we used signals which were not included in the parameter search to remove any bias. For the purpose of a better overview, we did not include the algorithm \hyperref[alg:GLA]{GLA}, since the observations in \cite{FGLA} clearly showed that \hyperref[alg:FGLA]{FGLA} outperforms \hyperref[alg:GLA]{GLA} in nearly every setting.  

For the Figure \ref{ZERO} we considered five signals, varying in their nature. For \hyperref[alg:DM]{DM} and \hyperref[alg:RAAR]{RAAR} we chose $\rho=0.8$ and $\lambda=0.9$  respectively, for \hyperref[alg:FGLA]{FGLA} $\alpha=0.99$ and for \hyperref[alg:AGLA]{AGLA} the best combination from Table \ref{Table1}. 

\begin{figure}[htp]
    \centering
    \includegraphics[width=0.90\linewidth]{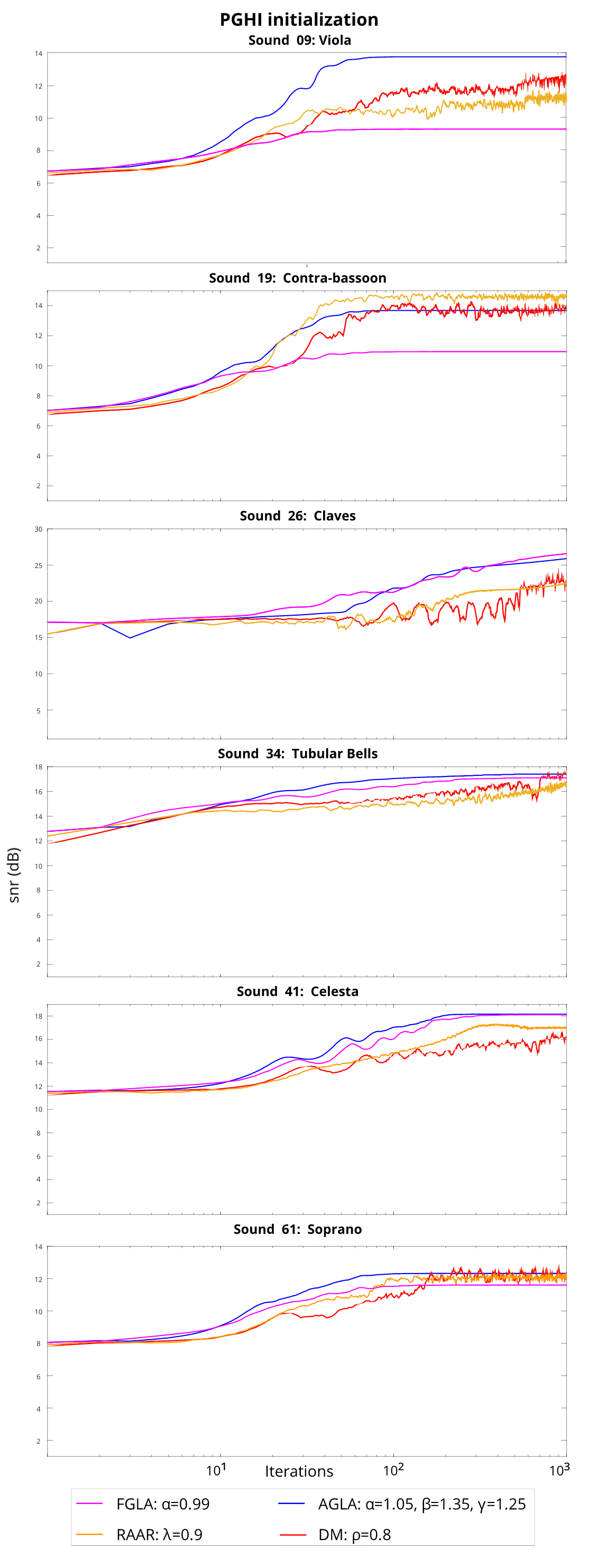}
    \caption{Comparison of \hyperref[alg:FGLA]{FGLA}, \hyperref[alg:AGLA]{AGLA}, \hyperref[alg:RAAR]{RAAR}, and \hyperref[alg:DM]{DM} with their respective best parameter choice with PGHI initialization}
    \vspace*{-4mm}
    \label{PGHI}
\end{figure}

We can observe that \hyperref[alg:DM]{DM} performed over all best, but it is also important to note that \hyperref[alg:DM]{DM} computes twice as many different projections per iterations compared to the other algorithms and the projections are the computationally most expensive segments of the algorithms. Still we see that \hyperref[alg:AGLA]{AGLA} can outperform \hyperref[alg:DM]{DM} (\texttt{Sound 26}) or reach a very similar final SSNR (\texttt{Sound 34}, \texttt{41} and \texttt{61}). Furthermore \hyperref[alg:RAAR]{RAAR}, an algorithm commonly used for the phase retrieval problem in areas beyond acoustics \cite{Luke}, performed for \texttt{Sound 19} better than \hyperref[alg:AGLA]{AGLA} and for the other four it reached a point of similar or lower SSNR. \hyperref[alg:DM]{DM} and \hyperref[alg:RAAR]{RAAR} started oscillating after 100 iterations, whereas \hyperref[alg:FGLA]{FGLA} and \hyperref[alg:AGLA]{AGLA} stayed stable, an important trait in algorithms. \hyperref[alg:AGLA]{AGLA} achieved higher SSNR than \hyperref[alg:FGLA]{FGLA} over all, except for \texttt{Sound 26}, where both of them performed similar and better than \hyperref[alg:DM]{DM} and \hyperref[alg:RAAR]{RAAR}.

For the next experiment we initialized the algorithms with the reconstructed signal from the \emph{Phase Gradient Heap Integration} (PGHI) presented in \cite{PGHI} and compared them in Figure \ref{PGHI} for the same signals as in Figure \ref{ZERO}. 

PGHI is a noniterative method for the phase retrieval problem, based on the phase-magnitude relations of an continuous STFT. It is efficient, but when used for the discrete setting it introduced inaccuracies depending on the parameters of the discrete STFT, and it was observed to give a good starting point for iterative algorithms \cite{Nicki}.

With PGHI-initialization we observe that the starting SSNR of the algorithms was significantly higher and notice that \hyperref[alg:AGLA]{AGLA} improved the most. This could be explained by the design of \hyperref[alg:AGLA]{AGLA}, as mentioned before, enabling the iterates to escape local solution as long as the steps $t_{n}-t_{n-1}$ and the difference between the direction of the projected and nonprojected sequences are large enough. \hyperref[alg:AGLA]{AGLA} performed better or equal than \hyperref[alg:DM]{DM} on all five signals and \hyperref[alg:RAAR]{RAAR} only reached a point of higher SSNR for \texttt{Sound 19}, similar to the zero-phase initialization experiment. For \hyperref[alg:DM]{DM} and \hyperref[alg:RAAR]{RAAR} there was a slight increase in the intensity of the oscillations. Interestingly for \texttt{Sound 61}, \hyperref[alg:FGLA]{FGLA}, \hyperref[alg:RAAR]{RAAR} and \hyperref[alg:DM]{DM} performed slightly worse than with zero-phase initialization, and only \hyperref[alg:AGLA]{AGLA} reached the same SSNR as before. This suggest that \hyperref[alg:AGLA]{AGLA} is an algorithm suitable for hybrid schemes, where one initializes with either a non-iterative scheme or the end point of another scheme, maybe faster, after a fixed number of iterations as it has been studied in \cite{beyond}.

In Figure \ref{LIMS} we plotted \hyperref[alg:GLA]{GLA} with \hyperref[alg:FGLA]{FGLA} and \hyperref[alg:AGLA]{AGLA} with their respective best parameter choices covered by the convergence guarantee. We notice that \hyperref[alg:AGLA]{AGLA} had some oscillation in the first 20 iterations and after that catches up to \hyperref[alg:GLA]{GLA} and \hyperref[alg:FGLA]{FGLA}, surpassing them in the end. This behaviour could be explained by $\beta=1.1$ being relatively large and $\gamma=0.2$ being relatively small, which means that the inertial/momentum step of the nonprojected sequence can be quite large and the projected iterates are weighted less, therefore we experienced not so optimal performance in the beginning, until the iterates got closer to a local solution. 


\section{Conclusion}
In this paper we presented the \textit{Accelerated Griffin-Lim algorithm} and proved convergence results for it and its predecessor, the \textit{Fast Griffin-Lim algorithm}. If the parameters are chosen to fulfill the necessary conditions to guarantee convergence and the minimizing properties, both {of them} outperform the \textit{Griffin-Lim algorithm}, making it now possible for them to replace {this classical method} as the standard and reliable phase retrieval algorithm for acoustic. We showed that one can expect good convergence behaviour of inertial based methods theoretically and practically in the phase retrieval setting. The numerical results indicate that there are parameter combinations outside of {the convergence} regimes for which the algorithm asserts fast behaviour, further sparking interest in studying these methods. The simulations indicate that the \textit{Accelerated Griffin-Lim algorithm} has the possibility to perform similarly error-wise to \textit{Relaxed Averaged Alternating Reflections} and \textit{Difference Map}, the two other powerful retrieval methods. The 
proposed method {achives} better convergence behaviour {in the sense of having fewer to none oscillations and performing better for good initialization.} 

\begin{figure}[h]
    \centering
    \includegraphics[width=0.90\linewidth]{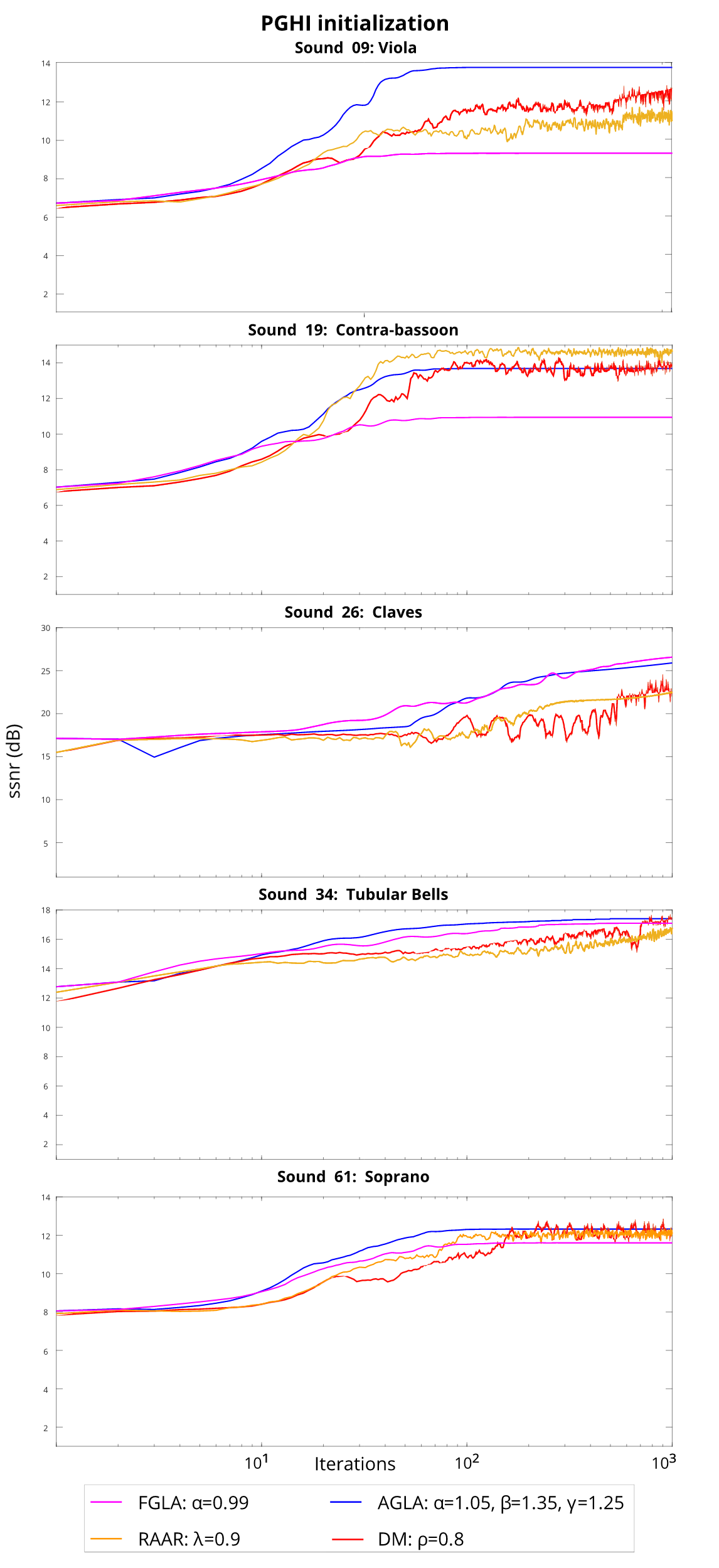}
    \caption{Comparison of \hyperref[alg:GLA]{GLA}, \hyperref[alg:FGLA]{FGLA}, and \hyperref[alg:AGLA]{AGLA}, with their respective parameter choice satisfying the convergence guarantee}
    \label{LIMS}
    \vspace*{-4mm}
\end{figure}



\vfill
{\footnotesize
\bibliographystyle{IEEEtran}

\vfill

\appendix

\begin{proof}[Proof of Proposition \ref{d2}]
For given $a\in \C$ and $r>0$, one can deduce by rewriting in polar coordinates that
\begin{align}
    \min_{\abs{b}=r} \abs{a-b}^2&=\min_{\theta\in [0,2\pi)} \abs{ a-re^{i\theta}}^2\label{app:min}  \\
    &= \min_{\theta\in [0,2\pi)} \abs{ a}^2+r^2-2\abs{a}r\cos (\angle a - \theta ). \notag
\end{align}
Since this minimum is attained at $\theta=\angle a$, we get
\begin{align*}
    \min_{\abs{b}=r} \abs{a-b}^2&= \abs{a-re^{i\angle a}}^2\\
    &=  \abs{\abs{a}-r}^2 \abs{e^{i \angle a}}^2
    =\abs{\abs{a}-r}^2.
\end{align*}
If $r=0$, then $b=0$ is the unique minimizer of \eqref{app:min}. Therefore we can see that
\begin{align}
    d_{C_2}^2(c)&=\min_{v\in C_2} \norm{c-v}^2 \label{app:minsum} \\ &= \sum_{i=1}^L\min_{v_i\in \C, \abs{v_i}=s_i}\abs{c_i-v_i}^2 \notag\\&= \sum_{i=1}^L\abs{\abs{c_i}-s_i}^2=\norm{\abs{c}-s}^2, \notag
\end{align}
which proves the first statement. For the second statement, we use the definition \eqref{def:PC2} of $P_{C_2}$ 
\begin{align*}
    \norm{c-P_{C_2}(c)}^2&= \sum_{i=1}^L \abs{c_i-(P_{C_2}(c))_i}^2\\
    &=\sum_{c_i\neq 0 }\abs{c_i-c_i\frac{s_i}{\abs{c_i}}}^2+\sum_{ c_i= 0 }\abs{s_i}^2 \\
    &=\sum_{c_i\neq 0 }\abs{c_i}^2\abs{1-\frac{s_i}{\abs{c_i}}}^2+\sum_{ c_i= 0 }\abs{s_i}^2
    \\
    &=\sum_{c_i\neq 0 }\abs{\abs{c_i}-s_i}^2+\sum_{ c_i= 0 }\abs{s_i}^2 \\
    &=\sum_{i=1}^L\abs{\abs{c_i}-s_i}^2 = d^2_{C_2}(c).
\end{align*}
Moreover, we see from \eqref{app:min} that $\theta = \angle \alpha$ is the unique minimizer when $a\neq 0$, thus $P_{C_2}(c)$ is the unique minimizer of \eqref{app:minsum} for $c\notin D$ by the calculations above. 
\end{proof}

\begin{proof}[Proof of Proposition \ref{prop:Subdiff}]
In \cite{DistSqu}, an explicit formula for the limiting subgradient of the power of a distance function is derived. By this result, we know that for $y\in \R^{M \times 2}$
\begin{align*}
    \partial \left( \frac{1}{2}d_{C_2}^2\right)(y)={y}-\overline{P}_{C_2}(y).
\end{align*}
By Proposition \ref{d2} we can deduce that for $y\notin D$
\begin{align*}
    \partial \left(\frac{1}{2}d_{C_2}^2\right)(y)= \{ y-P_{C_2}(y) \}
\end{align*}
holds, since then $\overline{P}_{C_2}(y)=\{P_{C_2}(y)\}$ and that $\frac{1}{2}d_{C_2}^2(y)=\frac{1}{2}\norm{y-P_{C_2}(y)}^2.$ {If we look at the definition of $P_{C_2}$, we notice that each component is continuously differentiable around a neighbourhood of any $y\notin D$, since $x\mapsto \frac{x}{\abs{x}}$ is continously differentiable around a neighbourhood of any $x\neq 0$. Since $\norm{\cdot}^2$ is continuously differentiable everywhere, we deduce that $\frac{1}{2}d_{C_2}^2$ is continuously differentiable  around a neighbourhood of any $y\notin D$.} Therefore $\partial \frac{1}{2}d_{C_2}^2(y)=\nabla \frac{1}{2}d_{C_2}^2(y)$ for $y\notin D$ by \cite[Corrolary 9.19]{rockafellar}.

\color{black}

For $\delta_{C_1}$ the limiting subgradient is given by the orthogonal space $\partial \delta_{C_1}(y)=C_1^\perp$ for $y\in C_1$ by \cite[Exercise 8.14]{rockafellar}. Furthermore, using the sum rule forumla \cite[Exercise 8.8]{rockafellar}, we can deduce for the objective function $f(y)=\delta_{C_1}(y)+\frac{1}{2}d_{C_2}^2(y)$ that for $y \in C_1 \setminus D$
\begin{align*}
\partial f(y)=C_1^\perp + y-P_{C_2}(y) .
\end{align*}

By the definition of the distance of the sum of sets to a point, we have that for $y\in C_1\setminus D$
\begin{align}
   d_{\partial f(y)}(0)&\leq \min_{z\in C_1^\perp,u\in \overline{P}_{C_2}(y)} \norm{z+y-u} \notag\\
   &\leq \min_{z\in C_1^\perp} \norm{z+y-P_{C_2}(y)}\label{app:d0}
\end{align}
By taking $z=P_{C_2}(y)-P_{C_1}(P_{C_2}(y)) \in C_1^\perp$, we get the conclusion from \eqref{app:d0}.
\end{proof}

\color{black}



\begin{proof}[Proof of Proposition  \ref{prop:Freg}]
 Choose $m\in \N$ such that for all $n\geq m$ the vector $c_n\notin D$ and let $n\geq m$. By the decreasing property of Theorem \ref{thm:Main} and by \eqref{i ct} we know that
\begin{align*}
    F_{K_2}(c_n,t_n) + \kappa_1\norm{c_n-t_n}^2 \leq F_{K_2}(c_{n-1},t_{n-1}),
\end{align*}
where $\kappa_1=\frac{K_1-K_2}{2\alpha^2}$ and thus proving the first statement, since $K_1>K_2$. \smallskip 
Using the triangle inequality we can see that by \eqref{eq:subdifF} and a similar argument as in Proposition \ref{prop:Subdiff} 
\begin{multline}\label{app2}
    d_{\partial F_n}(0)\leq \norm{c_n-P_{C_1}(P_{C_2}(c_n))+K_2(c_n-t_n)}\\+K_2\norm{c_n-t_n},
\end{multline}
since $c_n \in C_1\setminus D$. By \eqref{PR1.5} we know that 
\begin{align*}
    c_n-P_{C_1}(P_{C_2}(c_n))= \frac{1}{\gamma} (c_n-t_{n+1})+\frac{\gamma-1}{\gamma}(c_n-d_{n})
\end{align*}
Furthermore, by the definition of the algorithm we see that $c_n-d_n=\frac{\alpha}{\alpha-\beta}(c_n-t_{n})$.
Combining this observations in \eqref{app2} we see that
\begin{align*}
    d_{\partial F_n}(0) \leq \frac{1}{\gamma}\norm{c_n-t_{n+1}}+\mu\norm{c_n-t_n}
\end{align*}
with $\mu=\abs{ \frac{(\gamma-1)\alpha}{\gamma(\alpha-\beta)}+K_2}+K_2>0$. Furthermore using the triangle inequality we see that
\begin{align*}
    d_{\partial F_n}(0) \leq \frac{1}{\gamma}\norm{t_n-t_{n+1}}+\left(\frac{1}{\gamma}+\mu \right) \norm{c_n-t_n}.
\end{align*}
Since $c_n-t_n=\alpha(t_n-t_{n-1})$ we conclude that
\begin{align*}
     d_{\partial F_n}(0) \leq \kappa_2(\Delta_{t_{n+1}}+\Delta_{t_n})
\end{align*}
with $\kappa_2=\max\{\frac{1}{\gamma},{(\frac{1}{\gamma}+\mu) \alpha} \}$.
\end{proof}

\begin{lem} \label{lem:para}
Suppose that $\gamma >0$ and
\begin{equation}
\label{para:beta}
    0 \leq  2\beta{ \left\lvert 1 - \gamma \right\rvert}  <  {2 - \gamma}
\end{equation}
(i) For $0 < \gamma \leq 1$, if
\begin{equation}  \label{alpha:0-1}
    0 < \alpha < \left( 1 - \dfrac{1}{\gamma} \right) \beta + \dfrac{1}{\gamma} - \dfrac{1}{2} 
\end{equation}
then
\begin{equation*}
    K_1 > K_2 > 0 ,
\end{equation*}
where $K_1 := \frac{1-\gamma}{\gamma}(1+2\alpha+\alpha^2-\beta-\alpha\beta)+\frac{1}{\gamma}(1-\alpha-\alpha^2)$ and $K_2 := \frac{1-\gamma}{\gamma}(\alpha^2+\beta-\alpha \beta)+\frac{1}{\gamma}(\alpha-\alpha^2)$.

\noindent
(ii) For $1 < \gamma < 2$, if
\begin{equation}
\label{alpha:1-2}
    0 <  \alpha < \frac{1}{ 2\beta(\gamma-1) + \gamma  } - \frac{1}{2}
\end{equation}
then
\begin{equation*}
    K_1 > K_2 > 0 ,
\end{equation*}
where $K_1 := \frac{1-\gamma}{\gamma}(1+2\alpha+\alpha^2+\beta+\alpha\beta)+\frac{1}{\gamma}(1-\alpha-\alpha^2)$ and $K_2 := \frac{1-\gamma}{\gamma}(\alpha^2-\beta-3\alpha \beta)+\frac{1}{\gamma}(\alpha-\alpha^2)$.
\end{lem}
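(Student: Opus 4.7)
The proof plan is to verify both inequalities $K_1 > K_2$ and $K_2 > 0$ by direct algebraic computation, treating the two cases separately but in parallel fashion. The central observation is that, once the dust settles, the bound on $\alpha$ stated in the lemma turns out to be \emph{equivalent} to $K_1 > K_2$, so the first inequality requires no work beyond a careful expansion and rearrangement. Positivity of $K_2$ then follows from the same bound via a suitable factorization, using \eqref{para:beta} only to guarantee that the admissible interval for $\alpha$ is nonempty.

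For the step $K_1 > K_2$, I would compute $\gamma(K_1 - K_2)$ directly. In case (i) most cross terms cancel and the difference reduces to
\begin{equation*}
\gamma(K_1 - K_2) = (1-\gamma)(1 + 2\alpha - 2\beta) + (1 - 2\alpha) = (2-\gamma) - 2\beta(1-\gamma) - 2\gamma\alpha,
\end{equation*}
so $K_1 > K_2$ is equivalent to $\alpha < \frac{(2-\gamma) - 2\beta(1-\gamma)}{2\gamma}$, and one checks by rewriting $\tfrac{(\gamma-1)\beta}{\gamma} + \tfrac{1}{\gamma} - \tfrac{1}{2}$ over a common denominator that this is exactly \eqref{alpha:0-1}. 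The analogous computation in case (ii) gives $\gamma(K_1 - K_2) = (1-\gamma)(1 + 2\alpha + 2\beta + 4\alpha\beta) + (1 - 2\alpha)$; isolating $\alpha$ (and using $1-\gamma < 0$) yields the equivalent bound $\alpha < \frac{1}{2\beta(\gamma-1)+\gamma} - \frac{1}{2}$, which is precisely \eqref{alpha:1-2}. In both cases, \eqref{para:beta} is used once to ensure that the right-hand side is strictly positive.

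For the step $K_2 > 0$, the plan is to regroup $\gamma K_2$ as a sum of two pieces whose signs can be read off. In case (i) expansion gives the factorization
\begin{equation*}
\gamma K_2 = \beta(1-\gamma)(1-\alpha) + \alpha(1 - \gamma\alpha) = \beta(1-\gamma) + \alpha\bigl(1 - \beta(1-\gamma) - \gamma\alpha\bigr).
\end{equation*}
Since $\alpha$ satisfies \eqref{alpha:0-1}, the estimate $2\gamma\alpha < (2-\gamma) - 2\beta(1-\gamma)$ shows the bracket exceeds $\gamma/2$, and together with $\beta(1-\gamma) \geq 0$ this yields $\gamma K_2 > \beta(1-\gamma) + \alpha\gamma/2 > 0$. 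In case (ii) the analogous factorization is
\begin{equation*}
\gamma K_2 = \beta(\gamma-1)(1 + 3\alpha) + \alpha(1 - \gamma\alpha),
\end{equation*}
in which the first summand is clearly nonnegative and the second is positive as soon as $\gamma\alpha < 1$; the latter follows from \eqref{alpha:1-2} by noting that its right-hand side is strictly less than $1/\gamma$, since $(2+\gamma)(2\beta(\gamma-1)+\gamma) > 2\gamma$.

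The only genuine obstacle is bookkeeping. Case (ii) has many sign flips because $1 - \gamma < 0$, and it is easy to lose track of inequality directions when manipulating $2\beta(\gamma-1) + \gamma$. Writing things in terms of $|1-\gamma|$ and performing the algebraic equivalence between the stated bound on $\alpha$ and the inequality $\gamma(K_1 - K_2) > 0$ symbolically, before plugging back in, should make the verification routine.
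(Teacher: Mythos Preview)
Your proposal is correct. For the inequality $K_1>K_2$ you proceed exactly as the paper does: expand $\gamma(K_1-K_2)$, observe that the resulting condition is linear in $\alpha$, and identify it with the stated upper bound; the paper writes the same computation in case~(i) and leaves case~(ii) to the reader. The difference lies in the treatment of $K_2>0$. The paper rewrites $K_2>0$ as the quadratic inequality $\gamma\alpha^2-(1+(\gamma-1)\beta)\alpha+(\gamma-1)\beta<0$, computes the discriminant $\Delta_1=(1+(\gamma-1)\beta)^2-4\gamma(\gamma-1)\beta$, bounds it below by $(1+(\gamma-1)\beta)^2$, and then argues through a chain of inequalities that \eqref{alpha:0-1} forces $\alpha$ below the larger root. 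Your factorizations $\gamma K_2=\beta(1-\gamma)+\alpha\bigl(1-\beta(1-\gamma)-\gamma\alpha\bigr)$ in case~(i) and $\gamma K_2=\beta(\gamma-1)(1+3\alpha)+\alpha(1-\gamma\alpha)$ in case~(ii) sidestep the discriminant entirely: a single substitution of the bound on $\alpha$ makes each summand visibly nonnegative. The paper's route has the incidental merit of exhibiting the full interval of $\alpha$ on which $K_2>0$, but since only sufficiency is claimed, your argument is shorter and arguably more transparent, especially in case~(ii) where the quadratic approach would be messier.
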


\begin{proof}
One can check that \eqref{para:beta} guarantees the the feasibility of $\alpha$ in both \eqref{alpha:0-1} and \eqref{alpha:1-2}.
Moreover, \eqref{alpha:0-1} implies 
\begin{equation*}
    2 \gamma \alpha < 2 - \gamma + 2 \left( \gamma - 1 \right) \beta,
\end{equation*}
which is equivalent to $K_1-K_2>0$. After some calculations, we see that $K_2 > 0$ is equivalent to
\begin{equation}
\label{alpha:quad:1}
    \gamma \alpha^{2} - \left( 1 + \left( \gamma - 1 \right) \beta \right) \alpha + \left( \gamma - 1 \right) \beta < 0 .
\end{equation}
Observe that for every $0 < \gamma < 1$ it holds
\begin{align*}
    \Delta_{1} & = \left( 1 + \left( \gamma - 1 \right) \beta \right) ^{2} - 4 \gamma \left( \gamma - 1 \right) \beta \\
    & > \left( 1 + \left( \gamma - 1 \right) \beta \right) ^{2} > 0 ,
\end{align*}
Hence the inequality \eqref{alpha:quad:1} holds for $\alpha>0$ if and only if
\begin{equation*}
    0< \alpha < \dfrac{1 + \left( \gamma - 1 \right) \beta + \sqrt{\Delta_{1}}}{2 \gamma} .
\end{equation*}
Furthermore, if $\alpha $ fulfills \eqref{alpha:0-1}, then for $0 < \gamma < 1$
\begin{align*}
    2\gamma\alpha&<2 - \gamma + 2 \left( \gamma - 1 \right) \beta \\
    & < 2 + 2 \left( \gamma - 1 \right) \beta \nonumber \\
    & < 1 + \left( \gamma - 1 \right) \beta + \left\lvert 1 + \left( \gamma - 1 \right) \beta \right\rvert \nonumber \\
    & < 1 + \left( \gamma - 1 \right) \beta + \sqrt{\Delta_{1}} .
\end{align*}
Therefore \eqref{alpha:0-1} implies $K_1>K_2>0$ for this case. 
The result for $1 < \gamma < 2$ can be deduced similarly.
\end{proof}


\begin{lem} \label{lem:ABC} Let $A$, $B$ and $C$ be positive real numbers. Then $A\geq \frac{C^2}{B+C}$ implies 
\begin{align*}
    \frac{9}{4}A\geq 2C-B.
\end{align*}
\end{lem}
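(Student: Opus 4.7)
The plan is to split into two cases according to the sign of $2C-B$. If $2C-B\leq 0$, then since $A>0$ we immediately get $\frac{9}{4}A>0\geq 2C-B$, so there is nothing to do.

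Suppose instead $2C-B>0$. Since the function $x\mapsto \frac{x^{2}}{B+x}$ is increasing on $(0,+\infty)$ (or simply because $B,C>0$ makes $B+C>0$), the hypothesis $A\geq \frac{C^{2}}{B+C}$ reduces the desired inequality $\frac{9}{4}A\geq 2C-B$ to showing
\begin{equation*}
\frac{9}{4}\cdot \frac{C^{2}}{B+C}\geq 2C-B.
\end{equation*}
Multiplying through by the positive quantity $4(B+C)$, this is equivalent to
\begin{equation*}
9C^{2}\geq 4(2C-B)(B+C).
\end{equation*}
Expanding the right-hand side gives $8C^{2}+4BC-4B^{2}$, so after rearrangement the inequality becomes $C^{2}-4BC+4B^{2}\geq 0$, which is precisely the obvious estimate $(C-2B)^{2}\geq 0$.

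The only non-routine step is recognizing the perfect-square identity $C^{2}-4BC+4B^{2}=(C-2B)^{2}$ that makes the algebraic inequality transparent; the rest is case analysis and clearing denominators.
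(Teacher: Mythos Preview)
Your proof is correct but follows a different route from the paper's argument. The paper first rewrites the hypothesis as $\sqrt{(B+C)A}\geq C$ and then applies the weighted AM--GM inequality $\frac{2}{3}(B+C)+\frac{3}{2}A\geq 2\sqrt{(B+C)A}$; combining these and rearranging yields $\frac{9}{4}A\geq 2C-B$ directly, with no case split. Your approach instead splits on the sign of $2C-B$, reduces to the worst case $A=\frac{C^{2}}{B+C}$, clears denominators, and recognizes the resulting inequality as $(C-2B)^{2}\geq 0$. Your version is slightly more elementary (no AM--GM needed, only a perfect-square identity), while the paper's version is a bit slicker in that it avoids the case distinction and treats all configurations uniformly. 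Both are complete and equally rigorous.
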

\begin{proof}
Rewriting the given assumption leads to
\begin{align*}
   \sqrt{ (B+C)A}\geq C.
\end{align*}
We can apply the weighted arithmetic-geometric mean inequality $\frac{2}{3}\alpha+\frac{3}{2} \beta \geq 2\sqrt{\alpha \beta}$ and get
\begin{align*}
    \frac{2}{3}(B+C)+\frac{3}{2}A\geq 2C.
\end{align*}
By rewriting this inequality and multiplying by $\frac{3}{2}$ we get the conclusion.
\end{proof}

\end{document}